\newcommand{\lyxmathsym}[1]{\ifmmode\begingroup\def\b@ld{bold}
  \text{\ifx\math@version\b@ld\bfseries\fi#1}\endgroup\else#1\fi}
\numberwithin{equation}{section}
\numberwithin{figure}{section}
\theoremstyle{plain}
\newtheorem{thm}{\protect\theoremname}[section]
  \theoremstyle{plain}
  \newtheorem{lem}[thm]{\protect\lemmaname}
  \theoremstyle{definition}
  \newtheorem{defn}[thm]{\protect\definitionname}
  \theoremstyle{plain}
  \newtheorem{prop}[thm]{\protect\propositionname}
  \theoremstyle{plain}
  \theoremstyle{definition}
	\newtheorem{rem}[thm]{\protect\remarkname}
  \theoremstyle{plain}
  \providecommand{\corollaryname}{Corollary}
  \providecommand{\definitionname}{Definition}
  \providecommand{\examplename}{Example}
  \providecommand{\lemmaname}{Lemma}
  \providecommand{\propositionname}{Proposition}
\providecommand{\theoremname}{Theorem}
\providecommand{\remarkname}{Remark}
\begin{document}

\centerline{\bf Conformal Spectral Stability Estimates for the Neumann Laplacian}

\vskip 0.3cm
\centerline{\bf V.~I.~ Burenkov, V.~Gol'dshtein, A.~Ukhlov}

\vskip 0.3cm

\centerline{\small ABSTRACT}
\vskip 0.1cm
{\small 
We study the eigenvalue problem for the Neumann-Laplace operator in conformal regular planar domains $\Omega\subset\mathbb{C}$.
Conformal regular domains support the Poincar\'e inequality and this allows us to estimate the variation of the eigenvalues of the 
Neumann Laplacian upon domain perturbation via energy type integrals. Boundaries of such domains can have any Hausdorff dimension between one and two.
}
\vskip 0.1cm



\footnotetext{{\bf Key words and phrases:} eigenvalue problem, elliptic equations, conformal mappings, quasidiscs.}
\footnotetext{{\bf 2010 Mathematics Subject Classification:} 35P15, 35J40, 47A75, 47B25.}

\section{Introduction }

This paper is devoted to stability estimates for the  eigenvalues of the Neumann Laplacian
$$
-\Delta f = -\Big(\frac{\partial^2 f}{\partial x^2}+\frac{\partial^2 f}{\partial y^2}\Big),\,\,\, (x,y)\in \Omega,\,\,\,\,\, {\frac{\partial f}{\partial n}}\bigg|_{\partial\Omega}=0,
$$
in conformal regular planar domains $\Omega\subset\mathbb C$.

In \cite{GU15} was proved (see, also, \cite{KOT}) that in such domains the embedding operator
$$
i_{\Omega}: W^{1,2}(\Omega)\to L^2(\Omega)
$$
is compact. Hence in the conformal regular planar domains $\Omega\subset\mathbb{C}$ the spectrum of the Neumann Laplacian is discrete and can be written in the form
of a non-decreasing sequence
\[
0=\lambda_{1}[\Omega]<\lambda_{2}[\Omega]\leq...\leq\lambda_{n}[\Omega]\leq...\,,
\]
where each eigenvalue is repeated as many times as its multiplicity.

The spectral stability estimates for the Laplace operator were intensively studied in the last two decades. See, for example, the survey papers \cite{BLL}, \cite{B} where one can found the history of the problem, main results in this area and appropriate references.

Recall that $\Omega$ is called a conformal $\alpha$-regular domain, $\alpha>2$, \cite{BGU15} if there exists a conformal mapping 
$\varphi$ of the unit disc $\mathbb{D}$ onto $\Omega$ such that $\varphi'\in L^{\alpha}(\mathbb D)$.

A domain $\Omega$ is a conformal regular domain if it is a conformal $\alpha$-regular domain for some $\alpha>2$.

The notion of conformal $\alpha$-regular domains (conformal regular domains) does not depends on a choice of a conformal homeomorphism $\varphi$  \cite{BGU15}.

Note that any conformal regular domain has finite geodesic diameter 
\cite{GU5} and can be characterized in the terms of the (quasi)\-hy\-per\-bolic boundary condition \cite{BP, KOT}. 

The notion of conformal regularity was introduced in \cite{BGU15} for study of the spectral stability of the Dirichlet-Laplace operator. 
Let, for $2<\alpha\le\infty, \tau>0$, $G_{\alpha,\tau}$ be the set of all conformal mappings $\varphi$ of the unit disc $\mathbb D$ such that
$$
\|\varphi'\mid {L^{\alpha}(\mathbb D)}\|\le\tau\,.
$$

\begin{thm} \cite{BGU15} For any $2<\alpha\le\infty, \tau>0$ there exists $B_{\alpha,\tau}>0$ such that for any $\varphi_1,\varphi_2\in G_{\alpha,\tau}$ and for any $n\in\mathbb N$ the eigenvalues $\tilde{\lambda}_{n}$ of the Dirichlet-Laplace operator in domains $\Omega_1$ and $\Omega_2$ satisfy the inequality
$$
|\tilde{\lambda}_{n}[\Omega_{1}]-\tilde{\lambda}_{n}[\Omega_{2}]|\\
\leq c_n B_{\alpha,\tau} \|\varphi_1-\varphi_2\mid L^{1,2}(\mathbb D)\|\,,
$$
where $\Omega_{1}=\varphi_1(\mathbb D)$, $\Omega_{2}=\varphi_2(\mathbb D)$ and $c_{n}=\max\{\tilde{\lambda}_{n}^{2}[\Omega_{1}],\tilde{\lambda}_{n}^{2}[\Omega_{2}]\}$.
\end{thm}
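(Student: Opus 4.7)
The plan is to reduce both Dirichlet eigenvalue problems to weighted problems on the common reference domain $\mathbb{D}$ via the conformal pull--backs, and then compare the two weighted spectra by a Hölder--Sobolev estimate on the disc. For an eigenfunction $u$ on $\Omega_i$ with eigenvalue $\tilde{\lambda}_n[\Omega_i]$, setting $v=u\circ\varphi_i\in W_0^{1,2}(\mathbb{D})$ and using the conformal invariance of the Dirichlet integral together with the change--of--variables formula gives
\[
\int_{\Omega_i}|\nabla u|^2\,dxdy=\int_{\mathbb{D}}|\nabla v|^2\,dudv,\qquad
\int_{\Omega_i} u^2\,dxdy=\int_{\mathbb{D}} v^2\,h_i\,dudv,
\]
with weight $h_i:=|\varphi_i'|^2\in L^{\alpha/2}(\mathbb{D})$. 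So $\tilde{\lambda}_n[\Omega_i]$ coincides with the $n$th eigenvalue of $-\Delta v=\tilde{\lambda}\,h_i\,v$ on $\mathbb{D}$ with zero boundary values, and the reciprocal Courant--Fischer principle yields
\[
\frac{1}{\tilde{\lambda}_n[\Omega_i]}=\sup_{V_n\subset W_0^{1,2}(\mathbb{D})}\,\inf_{v\in V_n\setminus\{0\}}\,
\frac{\int_{\mathbb{D}} v^2 h_i\,dudv}{\int_{\mathbb{D}}|\nabla v|^2\,dudv}.
\]

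Applying the standard ``sup of differences'' estimate to the two $\sup\inf$ representations gives
\[
\Bigl|\tilde{\lambda}_n[\Omega_1]^{-1}-\tilde{\lambda}_n[\Omega_2]^{-1}\Bigr|\le
\sup_{v\in W_0^{1,2}(\mathbb{D})\setminus\{0\}}\frac{\int_{\mathbb{D}} v^2\,|h_1-h_2|\,dudv}{\int_{\mathbb{D}}|\nabla v|^2\,dudv}.
\]
Multiplying through by $\tilde{\lambda}_n[\Omega_1]\,\tilde{\lambda}_n[\Omega_2]\le c_n$ transfers this bound to $|\tilde{\lambda}_n[\Omega_1]-\tilde{\lambda}_n[\Omega_2]|$ and produces the factor $c_n$, reducing the theorem to bounding the above supremum by $B_{\alpha,\tau}\,\|\varphi_1-\varphi_2\mid L^{1,2}(\mathbb{D})\|$.

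To handle the supremum, I would factor
\[
|h_1-h_2|=\bigl||\varphi_1'|-|\varphi_2'|\bigr|\,\bigl(|\varphi_1'|+|\varphi_2'|\bigr)\le|\varphi_1'-\varphi_2'|\,\bigl(|\varphi_1'|+|\varphi_2'|\bigr)
\]
and apply the three--factor Hölder inequality with exponents $2\alpha/(\alpha-2)$, $\alpha$, $2$: the $L^2$--factor is precisely $\|\varphi_1-\varphi_2\mid L^{1,2}(\mathbb{D})\|$; the $L^\alpha$--factor is bounded by $2\tau$ by hypothesis; and the remaining factor is $\|v\|_{L^{4\alpha/(\alpha-2)}(\mathbb{D})}^2$. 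Because $\alpha>2$, the exponent $4\alpha/(\alpha-2)$ is finite, so the two--dimensional Sobolev embedding on the disc supplies a constant $C_\alpha$ with $\|v\|_{L^{4\alpha/(\alpha-2)}}\le C_\alpha\|\nabla v\|_{L^2}$ on $W_0^{1,2}(\mathbb{D})$. Collecting the estimates yields
\[
\int_{\mathbb{D}} v^2|h_1-h_2|\,dudv\le 2 C_\alpha^2 \tau\,\|\varphi_1-\varphi_2\mid L^{1,2}(\mathbb{D})\|\,\|\nabla v\|_{L^2}^2,
\]
so that $B_{\alpha,\tau}:=2C_\alpha^2\tau$ works.

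The principal technical point is the calibration of the Hölder exponents: the splitting must be chosen so that the \emph{difference} $\varphi_1'-\varphi_2'$ ends up in $L^2$ (to match the $L^{1,2}$--seminorm on the right--hand side), while the individual derivatives land in $L^\alpha$ (where the class $G_{\alpha,\tau}$ gives a uniform bound) and $v$ is forced into a Sobolev--admissible $L^p$. The condition $\alpha>2$ is what makes the resulting exponent on $v$ finite and the Sobolev embedding usable, and passing to $1/\tilde{\lambda}_n$ is essential because it puts the perturbed weight in the numerator of the Rayleigh quotient, where $h_1-h_2$ appears linearly and can be estimated directly.
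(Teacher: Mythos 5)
Your proof is correct and is essentially the intended argument: the paper itself only quotes this theorem from \cite{BGU15}, but your scheme --- pull both problems back to weighted eigenvalue problems on $\mathbb{D}$, compare the reciprocal Rayleigh quotients via the sup--inf principle to get $|\tilde{\lambda}_n^{-1}[\Omega_1]-\tilde{\lambda}_n^{-1}[\Omega_2]|\le B$, and bound $\iint_{\mathbb D} v^2|h_1-h_2|$ by the three-factor H\"older inequality with exponents $\tfrac{2\alpha}{\alpha-2},\alpha,2$ plus the Sobolev embedding of $W_0^{1,2}(\mathbb D)$ into $L^{4\alpha/(\alpha-2)}(\mathbb D)$ --- is exactly the method the authors use for the Neumann analogue in Sections 2--4, and your constant $B_{\alpha,\tau}=2C_\alpha^2\tau$ matches the expression $[\tilde C(\alpha)]^2\big(\|\varphi_1'\mid L^{\alpha}(\mathbb D)\|+\|\varphi_2'\mid L^{\alpha}(\mathbb D)\|\big)$ quoted right after the theorem. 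The only cosmetic deviation is that the paper's Lemma~\ref{lem:TwoWeight} avoids reciprocals (necessary in the Neumann case where $\lambda_1=0$) by the monotonicity of $t/(1+Bt)$, whereas your reciprocal Courant--Fischer shortcut is legitimate here since all Dirichlet eigenvalues are positive.
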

Let us remark that the constant $B_{\alpha,\tau}$ depends on conformal geometry of domains $\Omega_1$ and $\Omega_2$ only \cite{BGU15}:
$$
B_{\alpha,\tau}=\left[\tilde{C}\left(\alpha\right)\right]^2\Big(\|\varphi_1'\mid L^{\alpha}(\mathbb D)\|+
\|\varphi_2'\mid L^{\alpha}(\mathbb D)\|\Big),
$$
where the constant $\tilde{C}(\alpha)$ is the best constant for corresponding Poincar\'e-Sobolev inequality in the unit disc.

In the case of the Neumann-Laplace operator we need an additional assumption on domains. 

\begin{defn}
Two conformal $\alpha$-regular domains   $\Omega_1,\Omega_2$  represent conformal $\alpha$-regular pair if there exists a conformal mapping $\psi:\Omega_1\to\Omega_2$ such that 
\begin{equation}
\label{alpharegeq}
\iint\limits_{\Omega_1}|(\psi'(z)|^{\alpha}~dxdy<\infty\,\,\&\,\,\iint\limits_{\Omega_2}|(\psi^{-1})'(w)|^{\alpha}~dudv<\infty
\end{equation}
for some $\alpha>2$.
\end{defn}

Two conformal regular domains  $\Omega_1,\Omega_2$ represent conformal regular pair if they are  conformal $\alpha$-regular pair for some $\alpha>2$.

\begin{prop} Two conformal $\alpha$-regular domains $\Omega_1=\varphi_1(\mathbb{D})$ and $\Omega_2=\varphi_2(\mathbb{D})$ represent an conformal $\alpha$-regular pair if  and only if  
$$
E_{\alpha}(\varphi_1,\varphi_2)=\left(\iint\limits_{\mathbb D}\max\left\{\frac{|\varphi_1'(z)|^{\alpha}}{|\varphi_2'(z)|^{\alpha-2}},\frac{|\varphi_2'(z)|^{\alpha}}{|\varphi_1'(z)|^{\alpha-2}}\right\}~dxdy\right)^{\frac{1}{\alpha}}<\infty.
$$
\end{prop}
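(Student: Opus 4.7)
The plan is to reduce both integrability conditions in (\ref{alpharegeq}) to integrals over the unit disc $\mathbb{D}$ via the change of variables induced by the conformal parametrizations $\varphi_1$ and $\varphi_2$. The natural candidate for $\psi$ is the canonical composition $\psi_0:=\varphi_2\circ\varphi_1^{-1}:\Omega_1\to\Omega_2$. Applying the chain rule to $\varphi_2=\psi_0\circ\varphi_1$ yields
$$
\psi_0'(\varphi_1(z))=\frac{\varphi_2'(z)}{\varphi_1'(z)},\qquad z\in\mathbb{D}.
$$

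Since the real Jacobian of a holomorphic map $\varphi_i$ is $|\varphi_i'|^2$, the substitution $w=\varphi_1(z)$ transforms the first integral in (\ref{alpharegeq}) into
$$
\iint_{\Omega_1}|\psi_0'(w)|^{\alpha}\,du\,dv=\iint_{\mathbb{D}}\frac{|\varphi_2'(z)|^{\alpha}}{|\varphi_1'(z)|^{\alpha-2}}\,dx\,dy,
$$
and a symmetric computation applied to $\psi_0^{-1}=\varphi_1\circ\varphi_2^{-1}$, with the substitution $w=\varphi_2(z)$, gives
$$
\iint_{\Omega_2}|(\psi_0^{-1})'(w)|^{\alpha}\,du\,dv=\iint_{\mathbb{D}}\frac{|\varphi_1'(z)|^{\alpha}}{|\varphi_2'(z)|^{\alpha-2}}\,dx\,dy.
$$
The two integrands on the right are precisely the expressions appearing inside the maximum defining $E_\alpha(\varphi_1,\varphi_2)^\alpha$. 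Since $\max\{a,b\}\le a+b\le 2\max\{a,b\}$ for positive $a,b$, the simultaneous finiteness of the two integrals is equivalent to $E_\alpha(\varphi_1,\varphi_2)<\infty$. This settles the ``$\Leftarrow$'' direction with $\psi=\psi_0$.

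For the ``$\Rightarrow$'' direction, suppose some conformal $\psi:\Omega_1\to\Omega_2$ satisfies (\ref{alpharegeq}). Any such $\psi$ can be written as $\psi=\varphi_2\circ\sigma\circ\varphi_1^{-1}$ for some Möbius automorphism $\sigma$ of $\mathbb{D}$. Since $|\sigma'|$ and $|(\sigma^{-1})'|$ are bounded above and away from zero on $\mathbb{D}$, repeating the change-of-variables calculation for $\psi$ produces integrands differing from those of $\psi_0$ only by bounded multiplicative factors, so the integrability passes from $\psi$ to $\psi_0$, and the previous step yields $E_\alpha(\varphi_1,\varphi_2)<\infty$. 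The main (mild) obstacle is precisely this last reduction: one has to verify that (\ref{alpharegeq}) is independent of the choice of conformal $\psi$ among the three-parameter family of conformal maps $\Omega_1\to\Omega_2$, which reduces to the uniform boundedness of the derivatives of Möbius automorphisms of $\mathbb{D}$.
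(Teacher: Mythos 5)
Your computation for the canonical map $\psi_0=\varphi_2\circ\varphi_1^{-1}$ --- the chain rule giving $\psi_0'(\varphi_1(z))=\varphi_2'(z)/\varphi_1'(z)$, the substitution $w=\varphi_1(z)$ with Jacobian $|\varphi_1'|^2$, the symmetric computation for $\psi_0^{-1}$, and the observation $\max\{a,b\}\le a+b\le 2\max\{a,b\}$ --- is exactly the paper's proof, and it is correct. The paper in fact stops there: it simply asserts that ``a conformal mapping $\psi:\Omega_1\to\Omega_2$ can be written as $\varphi_2\circ\varphi_1^{-1}$'' and does not discuss other conformal maps $\Omega_1\to\Omega_2$ at all.

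You correctly notice that the definition, read literally, asks for the existence of \emph{some} conformal $\psi$, so the ``only if'' direction requires reducing an arbitrary $\psi=\varphi_2\circ\sigma\circ\varphi_1^{-1}$ to $\psi_0$. But your reduction has a genuine gap. It is true that $|\sigma'|$ and $|(\sigma^{-1})'|$ are bounded above and below on $\mathbb{D}$ for a fixed M\"obius automorphism $\sigma$; however, the change-of-variables computation for $\psi$ yields the integrand $|\varphi_2'(\sigma(z))|^{\alpha}\,|\sigma'(z)|^{\alpha}\,|\varphi_1'(z)|^{2-\alpha}$, in which $\sigma$ sits \emph{inside the argument of} $\varphi_2'$. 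The ratio of this to the $\psi_0$-integrand contains the factor $|\varphi_2'(\sigma(z))|^{\alpha}/|\varphi_2'(z)|^{\alpha}$, which is not bounded in general (e.g.\ when $\varphi_2'$ blows up near one boundary point and decays near another, and $\sigma$ moves one of these points to the other). Substituting $\zeta=\sigma(z)$ does not help: it merely transfers the problem to the factor $|\varphi_1'(\sigma^{-1}(\zeta))|^{2-\alpha}$ versus $|\varphi_1'(\zeta)|^{2-\alpha}$. So the two integrals are not pointwise comparable, and the asserted ``integrability passes from $\psi$ to $\psi_0$'' does not follow from boundedness of $|\sigma'|$ alone. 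To make your argument rigorous you would either need a genuinely different argument showing that finiteness of $E_\alpha(\varphi_1,\varphi_2\circ\sigma)$ is independent of $\sigma$, or (as the paper implicitly does) read the definition as referring to the specific witness $\psi_0=\varphi_2\circ\varphi_1^{-1}$ determined by the given parametrizations.
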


A property of conformal regular pairs is not very restrictive. For example any two Lipschitz simply connected bounded domains represent a conformal regular pair. Any two quasidiscs (images of discs under quasiconformal homeomorphisms of the plane) also represent a conformal regular pair.

The main result of this paper is

\begin{thm} 
\label{mainthm}
Let $\Omega_1=\varphi_1(\mathbb{D})$ , $\Omega_2=\varphi_2(\mathbb{D})$ be a conformal $\alpha$-regular pair.
Then for any $n\in\mathbb N$
\begin{equation}
\label{estimate 2}
|\lambda_{n}[\Omega_{1}]-\lambda_{n}[\Omega_{2}]|
\leq 2c_n \left[C(\alpha)\right]^2 E_{\alpha}(\varphi_1,\varphi_2)\||\varphi_1'|-|\varphi_2'|\mid L^{2}(\mathbb D)\|\,,
\end{equation}
where $\Omega_{1}=\varphi_1(\mathbb D)$, $\Omega_{2}=\varphi_2(\mathbb D)$,
and 
\begin{equation}
\label{c_n}
c_{n}=\max\{\lambda_{n}^{2}[\Omega_{1}],\lambda_{n}^{2}[\Omega_{2}]\}\,
\end{equation}
\end{thm}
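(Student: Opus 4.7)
The plan is to combine the Courant--Fischer min-max characterization of the Neumann eigenvalues with a transplantation of eigenfunctions through the conformal map $\psi=\varphi_{2}\circ\varphi_{1}^{-1}\colon\Omega_{1}\to\Omega_{2}$, exploiting the conformal invariance of the two-dimensional Dirichlet form. The conformal $\alpha$-regular pair hypothesis and $E_{\alpha}(\varphi_{1},\varphi_{2})$ are both symmetric in $\varphi_{1},\varphi_{2}$, so it will suffice to prove a one-sided bound under the assumption $\lambda_{n}[\Omega_{2}]\geq\lambda_{n}[\Omega_{1}]$ and then swap the roles of the two domains for the converse.

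Let $u_{1},\dots,u_{n}$ be the first $n$ Neumann eigenfunctions of $\Omega_{1}$, orthonormal in $L^{2}(\Omega_{1})$, and set $v_{k}=u_{k}\circ\psi^{-1}\in W^{1,2}(\Omega_{2})$, $U_{k}=u_{k}\circ\varphi_{1}\in W^{1,2}(\mathbb{D})$. Conformal invariance of the two-dimensional Dirichlet form gives $\int_{\Omega_{2}}\nabla v_{k}\cdot\nabla v_{l}\,dw=\lambda_{k}[\Omega_{1}]\delta_{kl}$, while the chain-rule identity $|\psi'\circ\varphi_{1}|\,|\varphi_{1}'|=|\varphi_{2}'|$ together with the Jacobian $dw=|\psi'|^{2}\,dz$ yields $\int_{\Omega_{2}}v_{k}v_{l}\,dw=\int_{\mathbb{D}}U_{k}U_{l}|\varphi_{2}'|^{2}\,d\zeta$. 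Subtracting the orthonormality identity $\delta_{kl}=\int_{\mathbb{D}}U_{k}U_{l}|\varphi_{1}'|^{2}\,d\zeta$, the error matrix factors as
\[
M_{kl}=\int_{\mathbb{D}}U_{k}U_{l}\bigl(|\varphi_{2}'|-|\varphi_{1}'|\bigr)\bigl(|\varphi_{2}'|+|\varphi_{1}'|\bigr)\,d\zeta,
\]
isolating the difference of moduli driving the stability estimate. Feeding $\mathrm{span}\{v_{k}\}_{k=1}^{n}\subset W^{1,2}(\Omega_{2})$ into the min-max, together with $\sum_{k}\lambda_{k}[\Omega_{1}]c_{k}^{2}\leq\lambda_{n}[\Omega_{1}]|c|^{2}$ for unit $c$, reduces the problem to an estimate of $\|M\|_{\mathrm{op}}$:
\[
\lambda_{n}[\Omega_{2}]-\lambda_{n}[\Omega_{1}]\leq\frac{\lambda_{n}[\Omega_{1}]\,\|M\|_{\mathrm{op}}}{1-\|M\|_{\mathrm{op}}}.
\]

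For $\|M\|_{\mathrm{op}}$ I would split $U_{k}U_{l}\bigl(|\varphi_{2}'|-|\varphi_{1}'|\bigr)\bigl(|\varphi_{2}'|+|\varphi_{1}'|\bigr)$ by Hölder's inequality into three pieces: the factor $|\varphi_{2}'|-|\varphi_{1}'|$ taken in $L^{2}(\mathbb{D})$, producing the norm appearing in (\ref{estimate 2}); the weight $|\varphi_{1}'|+|\varphi_{2}'|$ combined with the $|\varphi_{i}'|^{2}$-Jacobians so as to transport back to $\Omega_{1}$ and $\Omega_{2}$, where the Proposition identifies $\int_{\Omega_{1}}|\psi'|^{\alpha}$ and $\int_{\Omega_{2}}|(\psi^{-1})'|^{\alpha}$ as the two terms controlling $E_{\alpha}^{\alpha}$; and the eigenfunction factor $\|u_{k}u_{l}\|_{L^{q}(\Omega_{1})}$ with $q=2\alpha/(\alpha-2)$, bounded by Cauchy--Schwarz and the Neumann Poincar\'e--Sobolev inequality on a conformal regular domain with best constant inherited from the disc constant $C(\alpha)$. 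The Poincar\'e--Sobolev step contributes $C(\alpha)\sqrt{\lambda_{k}[\Omega_{1}]}$ per eigenfunction, hence $[C(\alpha)]^{2}\lambda_{n}[\Omega_{1}]$ to the final bound. Assembling yields $\|M\|_{\mathrm{op}}\leq[C(\alpha)]^{2}E_{\alpha}(\varphi_{1},\varphi_{2})\,\lambda_{n}[\Omega_{1}]\,\bigl\|\,|\varphi_{1}'|-|\varphi_{2}'|\,\bigr\|_{L^{2}(\mathbb{D})}$; substitution into the Rayleigh-quotient inequality, with the factor $2$ absorbing $(1-\|M\|_{\mathrm{op}})^{-1}$ in the non-trivial regime, gives the one-sided bound with $c_{n}=\lambda_{n}^{2}[\Omega_{1}]$, and the symmetric argument then produces (\ref{estimate 2}).

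The main technical obstacle is the precise tuning of the Hölder decomposition so that the joint invariant $E_{\alpha}(\varphi_{1},\varphi_{2})$---whose integrand encodes the joint conformal regularity through maxima of ratios $|\varphi_{1}'|^{\alpha}/|\varphi_{2}'|^{\alpha-2}$ and $|\varphi_{2}'|^{\alpha}/|\varphi_{1}'|^{\alpha-2}$---emerges rather than the individual $L^{\alpha}(\mathbb{D})$-norms of $\varphi_{1}'$ and $\varphi_{2}'$: one has to distribute the $|\varphi_{i}'|$ weights between the mapping side and the eigenfunction side exactly so that the identification supplied by the Proposition can be used. A secondary point is the use of the Poincar\'e--Sobolev inequality on $\Omega_{1}$: the case $n=1$ is trivial because $\lambda_{1}=0$, while for $n\geq2$ one decomposes the test function into its mean and its mean-zero component before applying Poincar\'e.
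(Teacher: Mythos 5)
Your overall strategy --- transplant the first $n$ eigenfunctions of $\Omega_1$ to $\Omega_2$ via $\psi=\varphi_2\circ\varphi_1^{-1}$, use conformal invariance of the Dirichlet integral, and feed the span into the min--max --- is genuinely different from the paper's route. The paper never compares eigenfunctions across the two domains: it pulls both problems back to the \emph{same} disc as weighted eigenvalue problems with weights $h_k=|\varphi_k'|^2$, and compares the Rayleigh quotients $\iint|\nabla f|^2\,/\iint h_k|f-f_{\mathbb D,h_k}|^2$ over the span of the eigenfunctions of $h_1$ (Lemma \ref{lem:TwoWeight}), using the elementary identity $\iint h|f-f_{\mathbb D,h}|^2=\inf_{c}\iint h|f-c|^2$ to pass between the two weighted means. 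Your factorization of $h_2-h_1$ as $\bigl(|\varphi_2'|-|\varphi_1'|\bigr)\bigl(|\varphi_2'|+|\varphi_1'|\bigr)$ and the subsequent H\"older splitting do reproduce the content of the paper's Lemmas \ref{lem:TwoWeiPol} and \ref{Lemma 1}, so that part of the plan is sound.

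The gap is in the claimed bound $\|M\|_{\mathrm{op}}\le[C(\alpha)]^2E_\alpha(\varphi_1,\varphi_2)\,\lambda_n[\Omega_1]\,\bigl\||\varphi_1'|-|\varphi_2'|\bigr\|_{L^2}$, specifically in the first row and column of $M$. You extract the factor $\lambda_n[\Omega_1]$ from the Sobolev--Poincar\'e bound $\|u_k\|_{L^q(\Omega_1)}\le C\|\nabla u_k\|_{L^2}=C\sqrt{\lambda_k}$, which is valid only for the mean-zero eigenfunctions $k\ge2$; for $k=1$ the eigenfunction is the nonzero constant $|\Omega_1|^{-1/2}$ while $\lambda_1=0$, so no such bound holds. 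Concretely, $M_{11}=(|\Omega_2|-|\Omega_1|)/|\Omega_1|$ and $M_{1l}=|\Omega_1|^{-1/2}\iint_{\mathbb D}U_l\,(h_2-h_1)\,d\zeta$ carry no factor of $\lambda_n$, and in the Rayleigh quotient the cross terms between the constant mode and the higher modes contribute at order $\sqrt{\lambda_n}$ rather than $\lambda_n$, which destroys the quadratic dependence $c_n=\lambda_n^2$ you need. Subtracting means does not immediately repair this, because the relevant means are taken with respect to two different measures ($h_1\,dxdy$ for orthogonality of the $U_k$, $h_2\,dxdy$ for the target Rayleigh quotient); this mismatch is exactly what the paper's $\inf_c$ manipulation in Lemma \ref{lem:TwoWeight} is designed to absorb, and your sketch has no substitute for it. A secondary accounting issue: you spend the factor $2$ on absorbing $(1-\|M\|_{\mathrm{op}})^{-1}$, but in the paper that factor already comes from $|\varphi_1'|+|\varphi_2'|\le2\max\{|\varphi_1'|,|\varphi_2'|\}$ in Lemma \ref{Lemma 1}, while the denominator correction $1+B\sqrt{\tilde c_n}$ in Lemma \ref{lem:TwoWeight} only improves the estimate and costs nothing; you cannot use the same factor $2$ for both purposes.
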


\begin{rem}
The constant $\tilde{C}(\alpha)$ is the best constant for corresponding weighted Poincar\'e-Sobolev inequalities in the unit disc.
\end{rem}

In Section~5 we consider in more detail the case in which
$\Omega_1$ and $\Omega_2$ are quasidiscs.

The estimate for $|\lambda_{n}[\Omega_{1}]-\lambda_{n}[\Omega_{2}]|$ can also be given in terms of the measure variation:
\begin{multline}
|\lambda_{n}[\Omega_{1}]-\lambda_{n}[\Omega_{2}]|\leq 2c_n \left[C\left(\alpha\right)\right]^2 E_{\alpha}(\varphi_1,\varphi_2)\times\\
\times\Big(\left[{\rm meas}\,(\varphi_1(\mathbb D^+))-{\rm meas}\,(\varphi_2(\mathbb D^+))\right]+
\left[{\rm meas}\,(\varphi_2(\mathbb D^-))-{\rm meas}\,(\varphi_1(\mathbb D^-))\right]\Big)^{\frac12}\,.
\nonumber
\end{multline}
where
\begin{equation}\label{D+-}
\mathbb D^+=\{z\in\mathbb D:J_{\varphi_1}(z)\geq J_{\varphi_2}(z) \}\,, ~~~\mathbb D^-=\{z\in\mathbb D:J_{\varphi_1}(z)< J_{\varphi_2}(z) \}
\end{equation}
and $J_{\varphi_1}, J_{\varphi_2}$ are the Jacobians of the mappings $\varphi_1$, $\varphi_2$ respectively.

The inequality (\ref{estimate 2}) hold for any $\varphi_1,\varphi_2$ under consideration, but they are non-trivial only if 
$$
\||\varphi_1'|-|\varphi_2'|\mid L^{2}(\mathbb D)\|< \left(\sqrt{c_n} 2\left[C\left(\alpha\right)\right]^2 E_{\alpha}(\varphi_1,\varphi_2)\right)^{-1}
$$
respectively, because the inequality $|\lambda_{n}[\Omega_{1}]-\lambda_{n}[\Omega_{2}]|<\sqrt{c_n}$ obviously holds for any $\lambda_{n}[\Omega_{1}], \lambda_{n}[\Omega_{2}]$.

In this article we adopt an investigation method based on the theory of composition operators \cite{U1,VU1}. This method was applied in \cite{BGU15} to the spectral stability problem for the Dirichlet Laplacian and to the eigenvalue problem in \cite{GU15}.

Let $\Omega\subset\mathbb{C}$ be an arbitrary bounded simply connected planar
domain with a smooth boundary. Consider the classic eigenvalue problem
for the Neumann Laplacian in $\Omega$
\[
\begin{cases}
-\Delta_{w}g(w)=\lambda g(w),\,\, w\in\Omega,\\
\,\,\,\,\, \frac{\partial g}{\partial n} \big|_{\partial\Omega}=0\,,
\end{cases}
\]
where $$
\Delta_w=\frac{\partial^2}{\partial u^2}+\frac{\partial^2}{\partial v^2},\,\,\,w=u+iv\,.
$$
By the Riemann Mapping Theorem there exists a conformal mapping
$\varphi:\mathbb{D}\to\Omega$ from the unit disc $\mathbb{D}$ to
$\Omega$. Then, by the chain rule for the function $f(z)=g\circ\varphi(z)$,
we have
\begin{multline*}
\Delta_{z}f(z)=\Delta_{z}(g\circ\varphi(z))=(\Delta_{w}g)(\varphi(z))\cdot|\varphi'(z)|^{2}\\
=-\lambda g(\varphi(z))\cdot|\varphi'(z)|^{2}=-\lambda|\varphi'(z)|^{2}f(z).
\end{multline*}
 Here $\Omega\ni w=\varphi(z),\,\,\, z\in\mathbb{D}$. Hence we obtain
the weighted eigenvalue problem for the Neumann Laplacian in the unit
disc $\mathbb{D}$
\[
\begin{cases}
-\Delta f(z)=\lambda h(z)f(z),\,\, z\in\mathbb{D}\,,\\
\,\,\,\,\, \frac{\partial f}{\partial n}\big|_{\partial\mathbb D}=0\,,
\end{cases}
\]
 where
\begin{equation}\label{h}
h(z):=|\varphi'(z)|^{2}=J_{\varphi}(z)=\frac {\lambda_{\mathbb{D}}^{2}(z)}{\lambda_{\Omega}^{2}(\varphi(z))}
\end{equation}
 is the hyperbolic (conformal) weight defined by the conformal mapping
$\varphi:\mathbb{D}\to\Omega$. Here
$\lambda_{\mathbb{D}}$ and $\lambda_{\Omega}$ are hyperbolic metrics
in $\mathbb{D}$ and $\Omega$ respectively \cite{BM}.

That means that the eigenvalue problem in $\Omega$ is equivalent to
the weighted eigenvalue problem in the unit disc $\mathbb{D}$.

In the sequel we consider the weak formulation of the weighted eigenvalue problem for Neumann Laplacian, namely:
\begin{equation}
\iint\limits _{\mathbb{D}}(\nabla f(z)\cdot\nabla\overline{{g(z)}})~dxdy=\lambda\iint\limits _{\mathbb{D}}h(z)f(z)\overline{{g(z)}}~dxdy,\,\,~~\forall g\in W^{1,2}(\mathbb{D},h,1).\label{WEn}
\end{equation}

The method suggested to study the weighted eigenvalue problem
for the Dirichlet Laplacian is based on the theory of composition operators
\cite{U1,VU1} and the ``\,transfer\,'' diagram
suggested in \cite{GGu}. Universal hyperbolic weights for weighted
Sobolev inequalities were introduced in \cite{GU4} (see also \cite{LM1}).

\section{The weighted eigenvalue problem}

Let $\Omega\subset\mathbb{C}$ be an open set on the complex plane.
The Sobolev space $W^{1,p}(\Omega)$, $1\leq p\le\infty$, is
the normed space of all locally integrable weakly differentiable functions
$f:\Omega\to\mathbb{R}$ with finite norm given by
\begin{align*}
\|f\mid W^{1,p}(\Omega)\|=\biggr(\iint\limits _{\Omega}|f(z)|^{p}\, dxdy\biggr)^{1/p}+\biggr(\iint\limits _{\Omega}|\nabla f(z)|^{p}\, dxdy\biggr)^{1/p}, \,\,1\leq p<\infty,
\\
\|f\mid W^{1,\infty}(\Omega)\|={\begin{array}{c}\\{\rm ess~sup}\\{z\in\Omega}\end{array}} |f(z)|+{\begin{array}{c}\\{\rm ess~sup}\\{z\in\Omega}\end{array}} |\nabla f(z)|.
\end{align*}

The seminormed Sobolev space $L^{1,p}(\Omega)$, $1\leq p\le\infty$, is the space of all locally integrable weakly differentiable functions
$f:\Omega\to\mathbb{R}$ with finite seminorm given by
\begin{align*}
\|f\mid L^{1,p}(\Omega)\|=\biggr(\iint\limits _{\Omega}|\nabla f(z)|^{p}\, dxdy\biggr)^{1/p}, \,\,1\leq p<\infty,
\\
\|f\mid L^{1,\infty}(\Omega)\|={\begin{array}{c}\\{\rm ess~sup}\\{z\in\Omega}\end{array}} |\nabla f(z)|.
\end{align*}

The weighted Lebesgue space $L^{p}(\Omega,h)$, $1\leq p<\infty$,
is the space of all locally integrable functions with the finite
norm
$$
\|f\mid L^{p}(\Omega,h)\|=\bigg(\iint\limits _{\Omega}|f(z)|^{p}h(z)~dxdy\biggr)^{\frac{1}{p}}.
$$
Here the weight $h:\Omega\to\mathbb{R}$ is a non-negative measurable function.

We define the weighted Sobolev space $W^{1,p}(\Omega,h,1)$, $1\leq p<\infty$,
as the normed space of all locally integrable weakly differentiable functions
$f:\Omega\to\mathbb{R}$ with the finite norm given by
$$
\|f\mid W^{1,p}(\Omega,h,1)\|=\|f\mid L^p(\Omega,h)\|+\|\nabla f\mid L^p(\Omega)\|.
$$

We denote
\begin{multline}
f_{{\mathbb D}, h}=\frac{1}{m_h(\mathbb D)}\iint\limits_{\mathbb D}f(z)h(z)~dxdy=g_{\Omega}=\frac{1}{|\Omega|}\iint\limits_{\Omega} g(w)~dudv,\\
f(z)=g(\varphi(z)), \,\,\, w=\varphi(z).
\nonumber
\end{multline}
Here
$$
m_h(\mathbb D)=\iint\limits_{\mathbb D}h(z)~dxdy=\iint\limits_{\mathbb D}J_{\varphi}(z)~dxdy=|\Omega|.
$$

Recall that for conformal regular domains for any $2\le q<\infty$ the Sobolev type inequality
$$
\|g-g_{\Omega}\mid L^{q}(\Omega)\|\leq C(q)\|\nabla f\mid L^{2}(\Omega)\|
$$
holds for any function $g\in W^{1,2}(\Omega)$ (see \cite{GU15}).

Using this inequality we prove the following weighted Poincar\'e inequality for the unit disc:

\begin{thm} \label{thm:BoundEm} Let  $\Omega\subset\mathbb{C}$ be a conformal $\alpha$-regular
domain and $\varphi:\mathbb{D}\to\Omega$ be a conformal mapping.

Then for any function $u\in W^{1,2}(\mathbb{D},h,1)$ the
inequality
\[
\|f- f_{{\mathbb D}, h}\mid L^{2}(\mathbb{D},h)\|\leq K^{*}\|f\mid L^{1,2}(\mathbb{D})\|
\]
holds.

Here $h$ is the hyperbolic
(conformal) weight defined by equality $(\ref{h})$. The exact constant $K^{*}$ is equal to the exact
constant in the inequality
$$\|g-g_{\Omega}\mid L^{2}(\Omega)\|\leq K\|g\mid L^{1,2}(\Omega)\|\,,~~~~
\forall g\in W^{1,2}(\Omega)\,.$$
\end{thm}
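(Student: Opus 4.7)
The plan is to reduce the weighted inequality on $\mathbb{D}$ to the unweighted Poincar\'e inequality on $\Omega$ by pulling back/pushing forward through the conformal map $\varphi$, exploiting two well-known conformal identities: (i) $J_\varphi = |\varphi'|^2 = h$, and (ii) $|\nabla(g\circ\varphi)(z)| = |(\nabla g)(\varphi(z))|\cdot|\varphi'(z)|$ (the Dirichlet integral is conformally invariant in dimension two).

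First, given $f\in W^{1,2}(\mathbb{D},h,1)$, I would define $g:=f\circ\varphi^{-1}$ on $\Omega$ and verify that $g\in W^{1,2}(\Omega)$. The change of variable $w=\varphi(z)$ together with $dudv=J_\varphi(z)\,dxdy=h(z)\,dxdy$ yields
\[
\iint_\Omega |g(w)|^2\,dudv \;=\; \iint_{\mathbb{D}} |f(z)|^2 h(z)\,dxdy \;=\; \|f\mid L^2(\mathbb{D},h)\|^2,
\]
and conformal invariance of the Dirichlet energy gives
\[
\iint_\Omega |\nabla g(w)|^2\,dudv \;=\; \iint_{\mathbb{D}} |(\nabla g)(\varphi(z))|^2 |\varphi'(z)|^2 \,dxdy \;=\; \iint_{\mathbb{D}} |\nabla f(z)|^2\,dxdy.
\]
Thus $\|g\mid L^{1,2}(\Omega)\| = \|f\mid L^{1,2}(\mathbb{D})\|$ and $g\in W^{1,2}(\Omega)$. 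The identification $f_{\mathbb{D},h}=g_\Omega$ is exactly the averaging identity recalled in the excerpt, so the same change of variable applied to $f-f_{\mathbb{D},h}$ in place of $f$ gives
\[
\|f-f_{\mathbb{D},h}\mid L^2(\mathbb{D},h)\| \;=\; \|g-g_\Omega\mid L^2(\Omega)\|.
\]

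Applying the unweighted Poincar\'e inequality on the conformal regular domain $\Omega$ (which is available by the hypothesis and the result quoted immediately before the theorem) with its sharp constant $K$, and combining the two identities above, yields
\[
\|f-f_{\mathbb{D},h}\mid L^2(\mathbb{D},h)\| \;=\; \|g-g_\Omega\mid L^2(\Omega)\| \;\leq\; K\,\|\nabla g\mid L^2(\Omega)\| \;=\; K\,\|f\mid L^{1,2}(\mathbb{D})\|,
\]
so $K^*\leq K$. To get equality of the best constants, I would run the argument in reverse: given any $g\in W^{1,2}(\Omega)$, set $f:=g\circ\varphi$; the same conformal identities show $f\in W^{1,2}(\mathbb{D},h,1)$ with matching $L^{1,2}$ and weighted $L^2$ norms, so any extremal for the weighted inequality on $\mathbb{D}$ corresponds to an extremal on $\Omega$, forcing $K^*=K$.

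The main technical point, and the only real obstacle, is justifying that the composition $f\mapsto f\circ\varphi^{-1}$ genuinely sends $W^{1,2}(\mathbb{D},h,1)$ into $W^{1,2}(\Omega)$ as a bijective isometry (on the relevant seminorms), i.e.\ that weak derivatives transform by the classical chain rule even though $\varphi^{-1}$ can be highly irregular near the boundary for conformal regular (non-Lipschitz) $\Omega$. This is handled by the composition operator theory of \cite{U1,VU1} together with the ``transfer diagram'' of \cite{GGu} cited in the paper: since $\varphi$ is a conformal (hence quasiconformal with constant $1$) homeomorphism, composition with $\varphi$ induces a bounded operator between the corresponding Sobolev spaces, and the chain rule computations above are rigorously valid. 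Once this is in place, the proof is just the two change-of-variable identities plus invocation of the unweighted Poincar\'e inequality on $\Omega$.
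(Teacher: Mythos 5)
Your proposal is correct and follows essentially the same route as the paper: both reduce the weighted inequality on $\mathbb{D}$ to the unweighted Poincar\'e inequality on $\Omega$ via the conformal change of variables $w=\varphi(z)$, using $h=J_\varphi=|\varphi'|^2$, the conformal invariance of the Dirichlet integral (the isometry of the composition operator on $L^{1,2}$ from \cite{GU4}), and the identification $f_{\mathbb{D},h}=g_\Omega$. The only presentational difference is that the paper first argues for $f\in C^1(\mathbb{D})$ and then passes to general $f\in W^{1,2}(\mathbb{D},h,1)$ by approximation, whereas you invoke the composition operator theory directly on Sobolev functions; both handle the same technical point you correctly identified.
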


\begin{proof} Since $\varphi^{-1}:\Omega\to\mathbb{D}$ is a conformal
mapping, the composition operator
\[
(\varphi^{-1})^{*}:L^{1,2}(\mathbb{D})\to L^{1,2}(\Omega),\,\,\,(\varphi^{-1})^{*}(f)=f\circ\varphi^{-1},
\]
is an isometry \cite{GU4}. 

Let $f\in C^{1}(\mathbb{D})$,
then $g=(\varphi^{-1})^{*}(f)=f\circ\varphi^{-1}\in C^{1}(\Omega)$. 
So, because $\Omega$ is a conformal regular domain, then for the function $g\in C^{1}(\Omega)$ the Poincar\'e inequality
\begin{equation}
\label{PoIn}
\|g-g_{\Omega}\mid L^{2}(\Omega)\|\leq K^{*}\|g\mid L^{1,2}(\Omega)\|
\end{equation}
holds with the exact constant $K^{*}=\sqrt{1/\lambda_2[\Omega]}$. Hence, using
the ``\,transfer\,'' diagram \cite{GGu} we
obtain
\begin{multline*}
\|f-f_{{\mathbb D},h}\mid L^{2}(\mathbb{D},h)\|=\biggl(\iint\limits _{\mathbb{D}}|f(z)-f_{{\mathbb D},h}|^{2}h(z)~dxdy\biggr)^{\frac{1}{2}}\\
=\biggl(\iint\limits _{\mathbb{D}}|f(z)-f_{{\mathbb D},h}|^{2}J(z,\varphi)(z)~dxdy\biggr)^{\frac{1}{2}}
=\biggl(\iint\limits _{\Omega}|g(w)-g_{\Omega}|^{2}~dudw\biggr)^{\frac{1}{2}}\\
\leq K^{*}\biggl(\iint\limits _{\Omega}|\nabla g(w))|^{2}~dudw\biggr)^{\frac{1}{2}}
=K^{*}\biggl(\iint\limits _{\mathbb{D}}|\nabla f(z))|^{2}~dxdy\biggr)^{\frac{1}{2}}
=K^{*}\|f\mid L^{1,2}(\mathbb{D})\|.
\end{multline*}
Approximating an arbitrary function $f\in W^{1,2}(\mathbb{D},h,1)$
by functions in the space $C^{1}(\mathbb D)$ we obtain that the inequality
\[
\|f-f_{{\mathbb D},h}\mid L^{2}(\mathbb{D},h)\|\leq K^{*}\|f\mid L^{1,2}(\mathbb{D})\|
\]
holds for any function $f\in W^{1,2}(\mathbb{D},h,1)$.

\end{proof}

By Theorem \ref{thm:BoundEm} it immediately follows that the spectral weighted eigenvalue problem (\ref{WEn}) with hyperbolic (conformal) weights $h$ in the unit disc $\mathbb D$
is equivalent to the eigenvalue problem in the domain $\Omega=\varphi(\mathbb D)$ (see also, for example \cite{LM1}) and
\begin{equation}\label{equality}
\lambda_{n}[h]=\lambda_{n}[\Omega], \,\,\, n\in\mathbb N\,.
\end{equation}

Hence the spectrum of  the weighted eigenvalue problem (\ref{WEn})  with hyperbolic (conformal) weights $h$
is discrete and can be written in the form of a non-decreasing sequence
\[
0=\lambda_{1}[h]<\lambda_{2}[h]\leq...\leq\lambda_{n}[h]\leq...\,,
\]
where each eigenvalue is repeated as many times as its multiplicity.

For weighted eigenvalues (eigenvalues in $\Omega$) we have also the following properties:

(i)
\[
\lim\limits _{n\to\infty}\lambda_{n}[h]=\lim\limits _{n\to\infty}\lambda_{n}[\Omega]=\infty\,,
\]

(ii) for each $n\in\mathbb{N}$
\begin{equation}
\lambda_n[\Omega]=\lambda_{n}[h]=\inf\limits _{\substack{L\subset W^{1,2}(\Omega)\\
\dim L=n
}
}\sup\limits _{\substack{f\in L\\
f\ne0
}
}\frac{\iint\limits _{\Omega}|\nabla f|^{2}~dxdy}{\iint\limits _{\Omega}|f|^{2}~dxdy}=
\inf\limits _{\substack{L\subset W^{1,2}(\mathbb D,h,1)\\
\dim L=n
}
}\sup\limits _{\substack{f\in L\\
f\ne0
}
}\frac{\iint\limits _{\mathbb D}|\nabla f|^{2}~dxdy}{\iint\limits _{\mathbb D}|f|^{2}h(z)~dxdy}\label{MinMax}
\end{equation}
 (Min-Max Principle), and
\begin{equation}
\lambda_{n}[h]=\sup\limits _{\substack{f\in M_{n}\\
f\ne0
}
}\frac{\iint\limits _{\mathbb D}|\nabla f|^{2}~dxdy}{\iint\limits _{\mathbb D}|f|^{2}h(z)~dxdy}\label{MaxPr}
\end{equation}
 where
\[
M_{n}={\rm span}\,\{\psi_{1}[h],\psi_{2}[h],...\psi_{n}[h]\}
\]
and $\{\psi_{k}[h]\}_{k=1}^{\infty}$ is an orthonormal (in the space $W^{1,2}(\mathbb D,h,1)$) set of eigenfunctions
corresponding to the eigenvalues $\{\lambda_{k}[h]\}_{k=1}^{\infty}$.

(iii) $\lambda_1[h]=0$ and $\psi_1=\frac{1}{\sqrt{m_h(\mathbb D)}}$. For $n\geq 2$ alongside with (\ref{MaxPr}) we have
\begin{equation}
\lambda_{n}[h]=\sup\limits _{\substack{f\in M_n\\f\ne0}}
\frac{\iint\limits _{\mathbb D}|\nabla f|^{2}~dxdy}{\iint\limits _{\mathbb D}|f-f_{\mathbb D,h}|^{2}h(z)~dxdy}.
\label{MaxPrL}
\end{equation}
(It may happen that the above fraction takes the form $\frac{0}{0}$. In this case we assume that $\frac{0}{0}=0$.)

Indeed, let 
\[
\tilde{M}_{n}={\rm span}\,\{\psi_{2}[h],...\psi_{n}[h]\}.
\]
Then, since $f\in M_n$ has the form $f=g-c$ where $g\in \tilde{M}_{n}$ and $c\in\mathbb R$, by (\ref{MaxPr})
\begin{multline}
\lambda_{n}[h]=\sup\limits _{\substack{g\in \tilde{M}_{n}\\g\ne0}}\sup\limits_{c\in\mathbb R}
\frac{\iint\limits _{\mathbb D}|\nabla (g-c)|^{2}~dxdy}{\iint\limits _{\mathbb D}|g-c|^{2}h(z)~dxdy}
=
\sup\limits _{\substack{g\in \tilde{M}_{n}\\g\ne0}}
\frac{\iint\limits _{\mathbb D}|\nabla g|^{2}~dxdy}{\inf\limits_{c\in\mathbb R}\iint\limits _{\mathbb D}|g-c|^{2}h(z)~dxdy}\\
=
\sup\limits _{\substack{g\in \tilde{M}_{n}\\g\ne0}}
\frac{\iint\limits _{\mathbb D}|\nabla g|^{2}~dxdy}{\iint\limits _{\mathbb D}|g-g_{\mathbb D,h}|^{2}h(z)~dxdy}
=
\sup\limits _{\substack{g\in \tilde{M}_{n}\\g\ne0}}\sup\limits_{c\in\mathbb R}
\frac{\iint\limits _{\mathbb D}|\nabla (g-c)|^{2}~dxdy}{\iint\limits _{\mathbb D}|(g-c)-(g-c)_{\mathbb D,h}|^{2}h(z)~dxdy}\\
=
\sup\limits _{\substack{f\in M_n\\f\ne0}}
\frac{\iint\limits _{\mathbb D}|\nabla f|^{2}~dxdy}{\iint\limits _{\mathbb D}|f-f_{\mathbb D,h}|^{2}h(z)~dxdy},
\nonumber
\end{multline}
because $g-g_{\mathbb D,h}=(g-c)-(g-c)_{\mathbb D,h}$ for any $c\in\mathbb R$.

\section{The $L^{1,2}$-seminorm estimates}

We consider two weighted eigenvalue problems in the unit disc $\mathbb D\subset\mathbb C$:
\[
\iint\limits _{\mathbb D}(\nabla f(z)\cdot\nabla\overline{{g(z)}})~dxdy=\lambda\iint\limits _{\mathbb D}h_{1}(z)f(z)\overline{g(z)}~dxdy\,,\,\,~~\forall g\in W^{1,2}(\mathbb D,h_1,1).
\]
 and
\[
\iint\limits _{\mathbb D}(\nabla f(z)\cdot\nabla\overline{{g(z)}})~dxdy=\lambda\iint\limits _{\mathbb D}h_{2}(z)f(z)\overline{{g(z)}}~dxdy\,,\,\,~~\forall g\in W^{1,2}(\mathbb D,h_2,1).
\]

The aim of this section is to estimate the ``\,distance\,''
between weighted eigenvalues $\lambda_{n}[h_{1}]$ and $\lambda_{n}[h_{2}]$.
\begin{lem}
\label{lem:TwoWeight} Let $\mathbb D\subset\mathbb{C}$ be the unit disc
and let $h_{1}$, $h_{2}$ be conformal weights on $\mathbb D$. Suppose that there exists a constant $B>0$ such that
\begin{multline}
\max\left\{\iint\limits _{\mathbb D}|h_{1}(z)-h_{2}(z)||f-f_{\mathbb D,h_1}|^{2}~dxdy,
\iint\limits _{\mathbb D}|h_{1}(z)-h_{2}(z)||f-f_{\mathbb D,h_2}|^{2}~dxdy\right\}\\
\leq B\iint\limits _{\mathbb D}|\nabla f|^{2}~dxdy,\,\,
\forall f\in L^{1,2}(\mathbb D).
\label{EqvWW}
\end{multline}

Then for any $n\in\mathbb{N}$
\begin{equation}
|\lambda_{n}[h_{1}]-\lambda_{n}[h_{2}]|\leq \frac{B\tilde c_n}{1+B\sqrt{\tilde c_n}}< B\tilde c_n\,,\label{LemEq}
\end{equation}
where
\begin{equation}
\label{tilde c_n}
\tilde c_n=\max\{\lambda_{n}^{2}[h_{1}],\lambda_{n}^{2}[h_{2}]\} \,.
\end{equation}
\end{lem}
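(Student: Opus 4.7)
The plan is to reduce the stated inequality to the simpler relation $|\lambda_n[h_1]-\lambda_n[h_2]| \leq B\lambda_n[h_1]\lambda_n[h_2]$, which is equivalent to $|1/\lambda_n[h_1]-1/\lambda_n[h_2]| \leq B$. Indeed, setting $a=\lambda_n[h_1]\le \lambda_n[h_2]=b$ (which we may assume WLOG), multiplying $b-a\le Bab$ through by $1+Bb$ gives $(b-a)(1+Bb)\le Bb^2 = B\tilde c_n$, which is exactly the bound claimed in \eqref{LemEq}. The case $n=1$ is trivial since $\lambda_1[h_1]=\lambda_1[h_2]=0$, so I focus on $n\ge 2$.

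The main step is to upper-bound $\lambda_n[h_2]$ via the Min-Max principle (\ref{MinMax}) with the $n$-dimensional trial subspace $L = M_n[h_1] = \mathrm{span}\{\psi_1[h_1],\dots,\psi_n[h_1]\}$. Since $\psi_1[h_1]$ is the constant eigenfunction, every $f\in L$ has the form $f=c+g$ with $g\in \tilde M_n[h_1]=\mathrm{span}\{\psi_2[h_1],\dots,\psi_n[h_1]\}$. The Rayleigh numerator depends only on $g$, while minimizing the denominator over the constant $c$ replaces it by $\iint_{\mathbb D}|g-g_{\mathbb D,h_2}|^2 h_2$ (attained at $c=-g_{\mathbb D,h_2}$). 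Therefore
\[
\lambda_n[h_2]\;\le\; \sup_{g\in \tilde M_n[h_1],\; g\ne 0}\frac{\iint_{\mathbb D}|\nabla g|^2\,dxdy}{\iint_{\mathbb D}|g-g_{\mathbb D,h_2}|^2 h_2\,dxdy}.
\]

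To bound this supremum I lower-bound the denominator by splitting $h_2 = h_1+(h_2-h_1)$. The $h_1$-piece satisfies
\[
\iint_{\mathbb D}|g-g_{\mathbb D,h_2}|^2 h_1\,dxdy \;\ge\; \iint_{\mathbb D}|g-g_{\mathbb D,h_1}|^2 h_1\,dxdy \;=\; \iint_{\mathbb D} g^2 h_1\,dxdy,
\]
the first inequality by optimality of the $h_1$-weighted mean, the equality because $g$ is $h_1$-orthogonal to $\psi_1[h_1]$ so $g_{\mathbb D,h_1}=0$. The Max-Principle (\ref{MaxPrL}) applied to $h_1$ then gives $\iint g^2 h_1 \ge (1/\lambda_n[h_1])\iint |\nabla g|^2$. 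For the perturbation piece, the decisive observation is to apply hypothesis (\ref{EqvWW}) to the shifted function $f=g-g_{\mathbb D,h_2}$: then $f_{\mathbb D,h_2}=0$, so $f-f_{\mathbb D,h_2}=g-g_{\mathbb D,h_2}$, and the second bound of (\ref{EqvWW}) reads
\[
\iint_{\mathbb D}|h_1-h_2|\,|g-g_{\mathbb D,h_2}|^2\,dxdy \;\le\; B\iint_{\mathbb D}|\nabla g|^2\,dxdy.
\]
Combining yields $\iint|g-g_{\mathbb D,h_2}|^2 h_2 \ge (1/\lambda_n[h_1] - B)\iint |\nabla g|^2$.

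The finish is routine. If $B\lambda_n[h_1]<1$, the last bound gives $\lambda_n[h_2]\le \lambda_n[h_1]/(1-B\lambda_n[h_1])$, which rearranges to $b-a\le Bab$; if instead $B\lambda_n[h_1]\ge 1$, then $Bab\ge b\ge b-a$ trivially. The algebraic reduction from the first paragraph completes the proof. The main obstacle to monitor is the bookkeeping between the two different weighted means $g_{\mathbb D,h_1}$ and $g_{\mathbb D,h_2}$: the critical cancellation $f-f_{\mathbb D,h_2}=g-g_{\mathbb D,h_2}$ for the choice $f=g-g_{\mathbb D,h_2}$ is exactly what lets hypothesis (\ref{EqvWW}) close the perturbation argument without producing extra error terms.
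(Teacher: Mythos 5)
Your proof is correct and is essentially the paper's own argument: both use the trial subspace spanned by the first $n$ eigenfunctions of one weight in the Min--Max principle for the other, split one weight as the other plus the perturbation $|h_1-h_2|$, control the perturbation term by hypothesis (\ref{EqvWW}), and use optimality of the weighted mean, and your core inequality $\lambda_n[h_2]-\lambda_n[h_1]\le B\lambda_n[h_1]\lambda_n[h_2]$ is algebraically equivalent to the paper's $\lambda_n[h_1]\ge \lambda_n[h_2]/(1+B\lambda_n[h_2])$. The only differences are organizational: you run the estimate as an upper bound on the larger eigenvalue, which forces the (correctly handled) case split $B\lambda_n[h_1]\ge 1$ that the paper avoids by dividing by $1+B\lambda_n[h_2]$ rather than $1-B\lambda_n[h_1]$, and you exploit the symmetry in $h_1,h_2$ upfront instead of repeating the argument with the roles swapped.
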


\begin{proof} Let
\[
M_{n}^{(1)}={\rm span}\,\{\psi_{1}[h_{1}],...\psi_{n}[h_{1}]\}.
\]
Since
\begin{equation}
\iint\limits_{\mathbb D}h_1(z)|f-f_{\mathbb D, h_1}|^2~dxdy=
\inf\limits_{c\in\mathbb R}\iint\limits_{\mathbb D}h_1(z)|f-c|^2~dxdy
\leq
\iint\limits_{\mathbb D}h_1(z)|f-f_{\mathbb D, h_2}|^2~dxdy,
\nonumber
\end{equation}
by (\ref{MaxPrL}) we get
\[
\lambda_{n}[h_{1}]=\sup\limits _{\substack{f\in M_{n}^{(1)}\\
f\ne0
}
}\frac{\iint\limits _{\mathbb D}|\nabla f|^{2}~dxdy}{\iint\limits _{\mathbb D}h_{1}(x)|f-f_{\mathbb D,h_1}|^{2}~dxdy}\geq
\sup\limits _{\substack{f\in M_{n}^{(1)}\\
f\ne0
}
}
\frac{\iint\limits _{\mathbb D}|\nabla f|^{2}~dxdy}{\iint\limits _{\mathbb D}h_{1}(x)|f-f_{\mathbb D,h_2}|^{2}~dxdy}.
\]
Hence, by (\ref{EqvWW}),
\begin{multline*}
\lambda_{n}[h_{1}]  \geq\sup\limits _{\substack{f\in M_{n}^{(1)}\\
f\ne0
}
}\frac{\iint\limits _{\mathbb D}|\nabla f|^{2}~dxdy}{\iint\limits _{\mathbb D}h_{2}(z)|f-f_{\mathbb D,h_2}|^{2}~dxdy+\iint\limits _{\mathbb D}|h_{1}(z)-h_{2}(z)||f-f_{\mathbb D,h_2}|^{2}~dxdy}
\\
\geq\sup\limits _{\substack{f\in M_{n}^{(1)}\\
f\ne0
}
}\frac{\iint\limits _{\mathbb D}|\nabla f|^{2}~dxdy}{\iint\limits _{\mathbb D}h_{2}(z)|f-f_{\mathbb D,h_2}|^{2}~dxdy+B\iint\limits _{\mathbb D}|\nabla f|^{2}~dxdy}.
\end{multline*}

Since
$$
\iint\limits _{\mathbb D}h_{2}(z)|f-f_{\mathbb D,h_2}|^{2}~dxdy=
\inf\limits_{c\in\mathbb R}\iint\limits _{\mathbb D}h_{2}(z)|f-c|^{2}~dxdy
\leq\iint\limits _{\mathbb D}h_{2}(z)|f|^{2}~dxdy,
$$
we get
\[
\begin{split}
\lambda_n[h_1] & \geq
\sup\limits _{\substack{f\in M_{n}^{(1)}\\
f\ne0
}
}\frac{\iint\limits _{\mathbb D}|\nabla f|^{2}~dxdy}{\iint\limits _{\mathbb D}h_{2}(z)|f|^{2}~dxdy+B\iint\limits _{\mathbb D}|\nabla f|^{2}~dxdy}\\
& = \sup\limits _{\substack{f\in M_{n}^{(1)}\\
f\ne0
}
}\frac{\iint\limits _{\mathbb D}|\nabla f|^{2}~dxdy}{\iint\limits _{\mathbb D}h_{2}(z)|f|^{2}~dxdy}\cdot\frac{1}{1+B\frac{\iint\limits _{\mathbb D}|\nabla f|^{2}~dxdy}{\iint\limits _{\mathbb D}h_{2}(z)|f|^{2}~dxdy}}\\
& \geq\sup\limits _{\substack{f\in M_{n}^{(1)}\\
f\ne0
}
}\frac{\iint\limits _{\mathbb D}|\nabla f|^{2}~dxdy}{\iint\limits _{\mathbb D}h_{2}(z)|f|^{2}~dxdy}\cdot\inf\limits _{\substack{f\in M_{n}^{(1)}\\
f\ne0
}
}\frac{1}{1+B\frac{\iint\limits _{\mathbb D}|\nabla f|^{2}~dxdy}{\iint\limits _{\mathbb D}h_{2}(z)|f|^{2}~dxdy}}\\
& =\sup\limits _{\substack{f\in M_{n}^{(1)}\\
f\ne0
}
}\frac{\iint\limits _{\mathbb D}|\nabla f|^{2}~dxdy}{\iint\limits _{\mathbb D}h_{2}(z)|f|^{2}~dxdy}\cdot\frac{1}{1+B\sup\limits _{\substack{f\in M_{n}^{(1)}\\
f\ne0}}\frac{\iint\limits _{\mathbb D}|\nabla f|^{2}~dxdy}{\iint\limits _{\mathbb D}h_{2}(z)|f|^{2}~dxdy}}.
\end{split}
\]
 Since the function $F(t)={t}/{(1+Bt)}$ is non-decreasing on $[0,\infty)$
and by (\ref{MinMax})
\[
\sup\limits _{\substack{f\in M_{n}^{(1)}\\
f\ne0
}
}\frac{\iint\limits _{\mathbb D}|\nabla f|^{2}~dxdy}{\iint\limits _{\mathbb D}h_{2}(z)|f|^{2}~dxdy}\geq\lambda_{n}[h_{2}],
\]
 it follows that
\[
\lambda_{n}[h_{1}]\geq\frac{\lambda_{n}[h_{2}]}{1+B\lambda_{n}[h_{2}]}=\lambda_{n}[h_{2}]- \frac{B\lambda_{n}^2[h_{2}]}{1+B\lambda_{n}[h_{2}]}.
\]
 Hence
\begin{equation}
\lambda_{n}[h_{1}]-\lambda_{n}[h_{2}]\geq-\frac{B\lambda_{n}^2[h_{2}]}{1+B\lambda_{n}[h_{2}]}\geq-\frac{B\tilde c_n}{1+B\sqrt{\tilde c_n}}.\label{LemEqR}
\end{equation}
 For similar reasons
\[
\lambda_{n}[h_{2}]-\lambda_{n}[h_{1}]\geq -\frac{B\lambda^2_{n}[h_{1}]}{1+B\lambda_{n}[h_{1}]}
\]
 or
\begin{equation}
\lambda_{n}[h_{1}]-\lambda_{n}[h_{2}]\leq \frac{B\lambda^2_{n}[h_{1}]}{1+B\lambda_{n}[h_{1}]}\le \frac{B\tilde c_n}{1+B\sqrt{\tilde c_n}}.\label{LemEqL}
\end{equation}

Inequalities (\ref{LemEqR}) and (\ref{LemEqL}) imply inequality
(\ref{LemEq}). \end{proof}

Now we estimate the constant $B$ in Lemma \ref{lem:TwoWeight} in terms of ``\,vicinity\,'' between weights.

By Theorem \ref{thm:BoundEm} 

\begin{equation}\label{Sobolev}
\|f-f_{\mathbb D,h_k}\mid L^{q}(\mathbb{D},h_k)\|\leq C_k(q)\|\nabla f\mid L^{2}(\mathbb{D})\|
\end{equation}
for any function $f\in W^{1,2}(\mathbb{D},h_k,1)$, where $h_k$, $k=1,2$, are the conformal weights defined be equality (\ref{h}). Here $C_k(q)$
are the best possible constants in these inequalities, that depend on $\alpha$ only.

\begin{lem}
\label{lem:TwoWeiPol} Let   $h_{1}$, $h_{2}$ be conformal weights on
$\mathbb{D}$ such that
\begin{multline}
d_{s}(h_{1},h_{2}):= \|(h_1-h_2)(\min\{h_1,h_2\})^{\frac{1-s}{s}}\mid L^{s}(\mathbb{D})\|\\
=\left(\iint\limits_{\mathbb D}(h_1-h_2)^s(\min\{h_1,h_2\})^{1-s}~dxdy\right)^{\frac{1}{s}}<\infty  
\label{EqvWWpol}
\end{multline}
 for some $1<s\le\infty$.

Then  inequality $(\ref{EqvWW})$ holds with the constant
\begin{equation}
B=\Big[C\Big(\frac{2s}{s-1}\Big)\Big]^2\,d_{s}(h_{1},h_{2})\,\,\,C\Big(\frac{2s}{s-1}\Big)=
\max\left\{C_1\Big(\frac{2s}{s-1}\Big),C_2\Big(\frac{2s}{s-1}\Big)\right\}\,.
\label{Lem2Es}
\end{equation}
\end{lem}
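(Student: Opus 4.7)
The plan is to derive the bound on the left-hand side of (\ref{EqvWW}) by a single application of H\"older's inequality designed so that one factor reproduces the quantity $d_s(h_1,h_2)$ while the other factor can be absorbed into the weighted Sobolev--Poincar\'e inequality (\ref{Sobolev}). Fix $k\in\{1,2\}$ and set $w(z)=\min\{h_1(z),h_2(z)\}$. Write
\[
|h_1-h_2|\,|f-f_{\mathbb D,h_k}|^2
= \bigl(|h_1-h_2|\,w^{\frac{1-s}{s}}\bigr)\cdot\bigl(w^{\frac{s-1}{s}}|f-f_{\mathbb D,h_k}|^2\bigr),
\]
and apply H\"older's inequality with conjugate exponents $s$ and $s/(s-1)$. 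The first factor integrates to exactly $d_s(h_1,h_2)$ by definition (\ref{EqvWWpol}).

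For the second factor, I would use the pointwise bound $w\leq h_k$ to replace $w$ by $h_k$, and then recognize
\[
\left(\iint_{\mathbb D}w\,|f-f_{\mathbb D,h_k}|^{\frac{2s}{s-1}}\,dxdy\right)^{\frac{s-1}{s}}
\leq \left(\iint_{\mathbb D}h_k\,|f-f_{\mathbb D,h_k}|^{q}\,dxdy\right)^{\frac{s-1}{s}}
\]
with $q=2s/(s-1)$. Raising inequality (\ref{Sobolev}) to the $q$-th power yields
\[
\iint_{\mathbb D}h_k\,|f-f_{\mathbb D,h_k}|^{q}\,dxdy\leq [C_k(q)]^q \left(\iint_{\mathbb D}|\nabla f|^2\,dxdy\right)^{q/2},
\]
so after raising to the exponent $(s-1)/s$, the powers collapse to $[C_k(q)]^{2}\iint_{\mathbb D}|\nabla f|^2\,dxdy$, because $q\cdot\tfrac{s-1}{s}=2$ and $\tfrac{q}{2}\cdot\tfrac{s-1}{s}=1$.

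Combining the two estimates and taking the larger of $C_1(q),C_2(q)$ gives
\[
\iint_{\mathbb D}|h_1-h_2|\,|f-f_{\mathbb D,h_k}|^2\,dxdy\leq [C(q)]^{2}\,d_s(h_1,h_2)\,\iint_{\mathbb D}|\nabla f|^2\,dxdy
\]
for both $k=1,2$, which is precisely (\ref{EqvWW}) with $B$ as in (\ref{Lem2Es}). A small technical point to address is that (\ref{Sobolev}) is formulated for $f\in W^{1,2}(\mathbb D,h_k,1)$ while the inequality is claimed for all $f\in L^{1,2}(\mathbb D)$; this is handled by the standard approximation argument used in the proof of Theorem~\ref{thm:BoundEm} together with the observation that both sides of (\ref{EqvWW}) are finite for $f\in C^1(\overline{\mathbb D})$. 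The main (minor) obstacle is just guessing the correct splitting with the weight $w^{(1-s)/s}\cdot w^{(s-1)/s}$: this is the only decomposition that simultaneously matches the definition of $d_s$ and produces the exponent $q=2s/(s-1)$ for which the weighted Poincar\'e inequality yields a clean factor of $\iint|\nabla f|^2$.
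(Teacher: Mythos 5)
Your proof is correct and follows essentially the same route as the paper: a single H\"older inequality with exponents $s$ and $s/(s-1)$ applied to the splitting $|h_1-h_2|\,w^{(1-s)/s}\cdot w^{(s-1)/s}|f-f_{\mathbb D,h_k}|^2$, followed by the weighted Sobolev inequality (\ref{Sobolev}) with $q=2s/(s-1)$. The only (immaterial) difference is that the paper inserts $h_k^{(1-s)/s}h_k^{(s-1)/s}$ and then bounds the first H\"older factor by $d_s(h_1,h_2)$ using $h_k\geq\min\{h_1,h_2\}$ with a negative exponent, whereas you insert $\min\{h_1,h_2\}$ directly and instead use $\min\{h_1,h_2\}\leq h_k$ in the second factor.
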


\begin{proof}
By the H\"older inequality and Sobolev inequality (\ref{Sobolev}) we get for $k=1,2$
\begin{multline}
\iint\limits _{\mathbb{D}}|h_{1}(z)-h_{2}(z)||f(z)-f_{\mathbb D,h_k}|^{2}~dxdy
\\
=\iint\limits _{\mathbb{D}}|h_{1}(z)-h_{2}(z)|h_k^{\frac{1-s}{s}}(z)h_k^{\frac{s-1}{s}}(z)|f(z)-f_{\mathbb D,h_k}|^{2}~dxdy
\\
\leq\|\left(h_{1}-h_{2}\right)h_k^{\frac{1-s}{s}}\mid L^s(\mathbb D)\|\left(\iint\limits _{\mathbb{D}}h_k(z)|f(z)-f_{\mathbb D,h_k}|^{\frac{2s}{s-1}}dxdy\right)^{\frac{s-1}s}
\\
\leq \Big[C_k\Big(\frac{2s}{s-1}\Big)\Big]^2\,d_{s}(h_{1},h_{2}) \iint\limits _{\mathbb{D}}|\nabla f(z)|^{2}dxdy\,.
\nonumber
\end{multline}
\end{proof}

By the two previous lemmas we get immediately the main result for the difference of weighted eigenvalues:
\begin{thm}
\label{thm:TwoWW} Let  $h_{1}$, $h_{2}$ be conformal weights on
$\mathbb{D}$. Suppose that $d_{s}(h_{1},h_{2})<\infty$ for some $s>1$.

Then, for every $n\in\mathbb{N}$,
\[
|\lambda_{n}[h_{1}]-\lambda_{n}[h_{2}]|\leq \tilde c_{n}\Big[C\Big(\frac{2s}{s-1}\Big)\Big]^2 d_{s}(h_{1},h_{2})\,.
\]
\end{thm}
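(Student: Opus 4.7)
The plan is to combine the two preceding lemmas directly, since they are tailor-made to be chained. First I would invoke Lemma \ref{lem:TwoWeiPol} with the given $s>1$: the hypothesis $d_s(h_1,h_2)<\infty$ is precisely what is needed, and the lemma produces an explicit admissible constant
\[
B = \Bigl[C\Bigl(\tfrac{2s}{s-1}\Bigr)\Bigr]^{2} d_s(h_1,h_2)
\]
such that the inequality \eqref{EqvWW} holds for every $f\in L^{1,2}(\mathbb{D})$. This is the step that converts the integral ``distance'' $d_s(h_1,h_2)$ between the weights into a bound on the mixed weighted $L^2$-norms appearing on the left-hand side of \eqref{EqvWW}.

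Next I would feed this $B$ into Lemma \ref{lem:TwoWeight}, whose hypothesis is exactly \eqref{EqvWW}. The conclusion of that lemma gives
\[
|\lambda_n[h_1]-\lambda_n[h_2]|\leq \frac{B\tilde c_n}{1+B\sqrt{\tilde c_n}}<B\tilde c_n,
\]
and substituting the value of $B$ from Lemma \ref{lem:TwoWeiPol} produces the claimed estimate
\[
|\lambda_n[h_1]-\lambda_n[h_2]|\leq \tilde c_n\Bigl[C\Bigl(\tfrac{2s}{s-1}\Bigr)\Bigr]^{2} d_s(h_1,h_2).
\]

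Since all the work has already been done in the two lemmas, there is no genuine obstacle left: the proof is a one-line composition of their conclusions. The only thing worth commenting on is the bookkeeping of the constant $C(2s/(s-1))$, which by the definition given in Lemma \ref{lem:TwoWeiPol} is the maximum of the two individual Poincar\'e--Sobolev constants $C_k(2s/(s-1))$ coming from Theorem \ref{thm:BoundEm} applied to the weights $h_1$ and $h_2$ respectively; one should verify in passing that this maximum indeed dominates both weighted Sobolev inequalities used in estimating the two terms inside the maximum on the left-hand side of \eqref{EqvWW}. After that, Lemma \ref{lem:TwoWeight} yields the desired bound, completing the proof.
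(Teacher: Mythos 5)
Your proposal is correct and is exactly the argument the paper intends: the theorem is stated immediately after the two lemmas with the remark that it follows "by the two previous lemmas," i.e.\ by substituting the constant $B=\bigl[C\bigl(\tfrac{2s}{s-1}\bigr)\bigr]^{2}d_{s}(h_{1},h_{2})$ from Lemma \ref{lem:TwoWeiPol} into the bound of Lemma \ref{lem:TwoWeight}. Your side remark on the constant $C(2s/(s-1))$ being the maximum of $C_1$ and $C_2$ matches the paper's definition in \eqref{Lem2Es}, so nothing is missing.
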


\section{On ``distances" $d_{s}(h_{1},h_{2})$ for hyperbolic (conformal) weights $h_{1},h_{2}$}

Let us analyze ``\,distances\,'' $d_{s}(h_{1},h_{2})$ for hyperbolic (conformal) weights.

Recall that hyperbolic (conformal) weights $h_{1}(z),h_{2}(z)$ for bounded simply connected planar domains are Jacobians 
$J_{\varphi_{1}}(z)$, $J_{\varphi_{2}}(z)$ of conformal homeomorphisms
$$\varphi_{1}:\mathbb{D}\to\Omega_{1},\,\,\, \varphi_{1}:\mathbb{D}\to\Omega_{2}.
$$

Since $\Omega_{1},\Omega_{2}$ are
bounded domains the Jacobians $J_{\varphi_{1}}(z)$, $J_{\varphi_{2}}(z)$ are integrable,
i.~e. $\varphi_{1}',\varphi_{2}'\in L^{2}(\mathbb D)$. An example of the
unit disc without the interval $(0,1)$ on the horizontal axis demonstrates
that for general simply connected domains $\Omega$ the Jacobians of conformal homeomorphisms
$\varphi:\mathbb{D}\to\Omega$ need not be integrable in a power greater
than $1$. Hence the integrability of  Jacobians to the power $s>1$
is possible only under additional assumptions on $\Omega$.

In \cite{GU5} it was proved that such integrability  is possible only for domains with finite
geodesic diameter. Hence, for $s>1$, the quantity $d_{s}(h_{1},h_{2})$ is not necessary defined
for all pairs of conformal weights $h_{1},h_{2}$.

\begin{lem}\label{Lemma 1}
\label{thm:MesureEstimate}
Let $\varphi_{1}:\mathbb{D}\to\Omega_{1}$, $\varphi_{2}:\mathbb{D}\to\Omega_{2}$ be conformal homeomorphisms and
$h_{1},h_{2}$ be the corresponding conformal weights. Suppose that for some $2<p<\infty$
$$
E_{p}(\varphi_1,\varphi_2)=\left(\iint\limits_{\mathbb D}\max\left\{\frac{|\varphi_1'(z)|^p}{|\varphi_2'(z)|^{p-2}},\frac{|\varphi_2'(z)|^p}{|\varphi_1'(z)|^{p-2}}\right\}~dxdy\right)^{\frac{1}{p}}<\infty.
$$

Then for $s=\frac{2p}{p+2}$
\begin{equation} \label{measure}
d_{s}(h_{1},h_{2})\leq 2E_{p}(\varphi_1,\varphi_2)
\cdot\||\varphi_1'|-|\varphi_2'|\mid L^{2}(\mathbb D)\|<\infty.
\end{equation}
\end{lem}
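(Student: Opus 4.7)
The plan is to expand $d_s(h_1,h_2)^s$ and apply a single Hölder inequality with a pair of exponents tailor-made for the $L^2$-norm of $|\varphi_1'|-|\varphi_2'|$ on one side and the integrand of $E_p(\varphi_1,\varphi_2)^p$ on the other.

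First I would use the algebraic identities
\[
|h_1-h_2| = \bigl||\varphi_1'|-|\varphi_2'|\bigr|\cdot(|\varphi_1'|+|\varphi_2'|), \qquad \min\{h_1,h_2\} = \bigl(\min\{|\varphi_1'|,|\varphi_2'|\}\bigr)^2,
\]
to rewrite
\[
|h_1-h_2|^s (\min\{h_1,h_2\})^{1-s} = \bigl||\varphi_1'|-|\varphi_2'|\bigr|^{s}\cdot\bigl(|\varphi_1'|+|\varphi_2'|\bigr)^{s}\bigl(\min\{|\varphi_1'|,|\varphi_2'|\}\bigr)^{2(1-s)}.
\]
The key observation is that, with $s=\frac{2p}{p+2}$, one has $\frac{s}{2}+\frac{s}{p}=1$. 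So I would split the integrand as a product and apply Hölder with conjugate exponents $\frac{2}{s}$ and $\frac{p}{s}$, matching the first factor $\bigl||\varphi_1'|-|\varphi_2'|\bigr|^s$ against $\frac{2}{s}$ (producing $\||\varphi_1'|-|\varphi_2'|\mid L^2(\mathbb D)\|^s$) and the second factor against $\frac{p}{s}$.

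Next I would simplify the remaining factor: raising $(|\varphi_1'|+|\varphi_2'|)^s(\min\{|\varphi_1'|,|\varphi_2'|\})^{2(1-s)}$ to the power $p/s$ and using $\tfrac{(1-s)p}{s}=-\tfrac{p-2}{2}$ (direct computation from $s=\tfrac{2p}{p+2}$) yields
\[
\frac{(|\varphi_1'|+|\varphi_2'|)^p}{(\min\{|\varphi_1'|,|\varphi_2'|\})^{p-2}}.
\]
To control this by the integrand of $E_p^p$, I would treat the pointwise cases $|\varphi_1'|\ge|\varphi_2'|$ and $|\varphi_1'|<|\varphi_2'|$ separately: in the former case $\min\{|\varphi_1'|,|\varphi_2'|\}=|\varphi_2'|$ and $(|\varphi_1'|+|\varphi_2'|)^p\le 2^p|\varphi_1'|^p$, so the ratio is $\le 2^p\,|\varphi_1'|^p/|\varphi_2'|^{p-2}$, which equals $2^p$ times the quantity inside the maximum defining $E_p$; the other case is symmetric.

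Putting everything together, Hölder gives
\[
\iint_{\mathbb D}|h_1-h_2|^s(\min\{h_1,h_2\})^{1-s}\,dxdy \le \bigl(2^p E_p(\varphi_1,\varphi_2)^p\bigr)^{s/p}\,\bigl\||\varphi_1'|-|\varphi_2'|\mid L^2(\mathbb D)\bigr\|^s,
\]
and taking $s$-th roots yields $d_s(h_1,h_2)\le 2\,E_p(\varphi_1,\varphi_2)\,\||\varphi_1'|-|\varphi_2'|\mid L^2(\mathbb D)\|$. There is no real obstacle here; the only delicate point is the choice of Hölder exponents — one must notice that $s=\frac{2p}{p+2}$ is precisely the value for which $\tfrac{s}{2}+\tfrac{s}{p}=1$ and $\tfrac{(1-s)p}{s}=-\tfrac{p-2}{2}$, after which the algebra proceeds automatically.
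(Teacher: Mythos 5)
Your proposal is correct and follows essentially the same route as the paper: the same factorization $|h_1-h_2|^s(\min\{h_1,h_2\})^{1-s}=\bigl||\varphi_1'|-|\varphi_2'|\bigr|^s(|\varphi_1'|+|\varphi_2'|)^s(\min\{|\varphi_1'|,|\varphi_2'|\})^{2(1-s)}$, the same H\"older step (your conjugate pair $\frac{2}{s},\frac{p}{s}$ coincides with the paper's $\frac{2}{s},\frac{2}{2-s}$ since $\frac{p}{s}=\frac{p+2}{2}=\frac{2}{2-s}$ for $s=\frac{2p}{p+2}$), and the same bound $(|\varphi_1'|+|\varphi_2'|)^p\le 2^p\max\{|\varphi_1'|,|\varphi_2'|\}^p$ yielding the factor $2$ and $E_p$.
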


\begin{proof}
By the definitions of $h_1,h_2$ and $d_{s}(h_{1},h_{2})$
\begin{multline}
\left[d_{s}(h_{1},h_{2})\right]^{s}=\iint\limits _{\mathbb{D}}\left|h_{1}(z)-h_{2}(z)\right|^{s}\left(\min\{h_1(z),h_2(z)\}\right)^{1-s}dxdy\\
=\iint\limits _{\mathbb{D}}\left||\varphi'_{1}(z)|^{2}-|\varphi'_{2}(z)|^{2}\right|^{s}\left(\min\{|\varphi'_{1}(z)|,|\varphi'_{2}(z)|\}\right)^{2(1-s)}dxdy\\
=
\iint\limits_{\mathbb{D}}\left||\varphi'_{1}(z)|+|\varphi'_{2}(z)|\right|^{s}\left(\min\{|\varphi'_{1}(z)|,|\varphi'_{2}(z)|\}\right)^{2(1-s)}\left||\varphi_{1}'(z)|-|\varphi_{2}'(z)|\right|^{s}dxdy.
\nonumber
\end{multline}
Applying to  the last integral the H\"older inequality  with $r=\frac{2}{s}$
($1\le r<2$ because $1<s\le 2$) and $r'=\frac{r}{r-1}=\frac{2}{2-s}$
we obtain
\begin{multline}
\left[d_{s}(h_{1},h_{2})\right]^{s}
\leq\left(\iint\limits _{\mathbb{D}}\left||\varphi_{1}'(z)|+|\varphi_{2}'(z)|\right|^{\frac{2s}{2-s}}\left(\min\{|\varphi'_{1}(z)|,|\varphi'_{2}(z)|\}\right)^{\frac{4(1-s)}{2-s}}dxdy\right)^{\frac{2-s}{2}}
\\
\times\left(\iint\limits _{\mathbb{D}}\left(|\varphi_{1}'(z)|-|\varphi_{2}'(z)|\right)^{2}dxdy\right)^{\frac{s}{2}}\\
\leq
2^s\left(\iint\limits_{\mathbb{D}}\frac{\max\left\{|\varphi_{1}'(z)|,|\varphi_{2}'(z)|\right\}^{\frac{2s}{2-s}}}{\min\left\{|\varphi_{1}'(z)|,|\varphi_{2}'(z)|\right\}^{\frac{4(s-1)}{2-s}}}dxdy\right)^{\frac{2-s}{2}}
\||\varphi_{1}'|-|\varphi_{2}'|\mid L^2(\mathbb D)\|^s.
\nonumber
\end{multline}
Since $s=\frac{2p}{p+2}$ we have
\begin{multline}
d_{s}(h_{1},h_{2})\leq 2\left(\iint\limits_{\mathbb{D}}\frac{\max\left\{|\varphi_{1}'(z)|,|\varphi_{2}'(z)|\right\}^{p}}{\min\left\{|\varphi_{1}'(z)|,|\varphi_{2}'(z)|\right\}^{p-2}}dxdy\right)^{\frac{1}{p}}
\||\varphi_{1}'|-|\varphi_{2}'|\mid L^2(\mathbb D)\|\\
=2\left(\iint\limits_{\mathbb D}\max\left\{\frac{|\varphi_1'(z)|^p}{|\varphi_2'(z)|^{p-2}},\frac{|\varphi_2'(z)|^p}{|\varphi_1'(z)|^{p-2}}\right\}~dxdy\right)^{\frac{1}{p}}
\||\varphi_{1}'|-|\varphi_{2}'|\mid L^2(\mathbb D)\|.
\end{multline}
\end{proof}

\begin{prop} Two conformal $\alpha$-regular domains $\Omega_1=\varphi_1(\mathbb{D})$ and $\Omega_2=\varphi_2(\mathbb{D})$ represent a conformal $\alpha$-regular pair if  and only if  
$$
E_{\alpha}(\varphi_1,\varphi_2)=\left(\iint\limits_{\mathbb D}\max\left\{\frac{|\varphi_1'(z)|^{\alpha}}{|\varphi_2'(z)|^{\alpha-2}},\frac{|\varphi_2'(z)|^{\alpha}}{|\varphi_1'(z)|^{\alpha-2}}\right\}~dxdy\right)^{\frac{1}{\alpha}}<\infty.
$$
\end{prop}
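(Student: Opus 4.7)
The plan is to take the canonical conformal map $\psi := \varphi_{2}\circ\varphi_{1}^{-1}\colon\Omega_{1}\to\Omega_{2}$, together with its inverse $\psi^{-1}=\varphi_{1}\circ\varphi_{2}^{-1}\colon\Omega_{2}\to\Omega_{1}$, and to translate each of the two defining integrability conditions (\ref{alpharegeq}) into one of the two integrals appearing under the maximum in $E_{\alpha}(\varphi_{1},\varphi_{2})$ by a single change of variables.

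First I would observe that, as a composition of two conformal maps, $\psi$ is conformal, and by the chain rule for $w=\varphi_{1}(z)\in\Omega_{1}$ with $z=\varphi_{1}^{-1}(w)$,
\[
\psi'(w)\;=\;\varphi_{2}'(z)\cdot(\varphi_{1}^{-1})'(w)\;=\;\frac{\varphi_{2}'(z)}{\varphi_{1}'(z)}.
\]
Passing to the disc via $w=\varphi_{1}(z)$ with Jacobian $du\,dv = |\varphi_{1}'(z)|^{2}\,dx\,dy$ then gives
\[
\iint_{\Omega_{1}}|\psi'(w)|^{\alpha}\,du\,dv \;=\; \iint_{\mathbb{D}}\frac{|\varphi_{2}'(z)|^{\alpha}}{|\varphi_{1}'(z)|^{\alpha-2}}\,dx\,dy,
\]
and the symmetric computation for $\psi^{-1}$ (now changing variables $w=\varphi_{2}(z)$) produces
\[
\iint_{\Omega_{2}}|(\psi^{-1})'(w)|^{\alpha}\,du\,dv \;=\; \iint_{\mathbb{D}}\frac{|\varphi_{1}'(z)|^{\alpha}}{|\varphi_{2}'(z)|^{\alpha-2}}\,dx\,dy.
\]

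At this point the equivalence reduces to an elementary sandwich: for any $a,b\geq 0$ one has $\max\{a,b\}\leq a+b\leq 2\max\{a,b\}$, so both integrals on the right are finite if and only if the integral of their pointwise maximum is finite, which is precisely the condition $E_{\alpha}(\varphi_{1},\varphi_{2})^{\alpha}<\infty$. This settles both directions of the equivalence for the canonical choice $\psi=\varphi_{2}\circ\varphi_{1}^{-1}$; any other conformal map $\Omega_{1}\to\Omega_{2}$ differs from $\psi$ only by precomposition with a conformal automorphism of $\mathbb{D}$, so working with the canonical $\psi$ is no loss of generality in view of the fact (already invoked in the paper for the one-domain version) that such integrability conditions are independent of the particular choice of conformal parametrization.

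I do not expect a genuine obstacle: the whole proof is two applications of change of variables followed by the trivial max/sum sandwich. The only point requiring a little care is the bookkeeping of powers, where each $|\varphi_{i}'|^{2}$ Jacobian factor cancels two of the $\alpha$ copies of $|\varphi_{i}'|$ in the denominator of $|\psi'|^{\alpha}$, producing the characteristic exponents $\alpha$ and $\alpha-2$ in the integrand of $E_{\alpha}$.
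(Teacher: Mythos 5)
Your proof is correct and takes essentially the same route as the paper: both work with the canonical map $\psi=\varphi_2\circ\varphi_1^{-1}$, apply the change of variables $w=\varphi_1(z)$ (and its counterpart for $\psi^{-1}$) to convert the two conditions in $(\ref{alpharegeq})$ into the two integrals under the maximum, and conclude. You are in fact slightly more explicit than the paper, which leaves both the max/sum sandwich and the independence of the choice of $\psi$ unstated.
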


\begin{proof}
The condition $E_{\alpha}(\varphi_1,\varphi_2)<\infty$ means that domains $\Omega_1=\varphi_1(\mathbb D),\Omega_2=\varphi_2(\mathbb D)$ represent conformal $\alpha$-regular pair.
Indeed, a conformal mapping $\psi:\Omega_1\to\Omega_2$ can be written as a composition 
$$
\psi(w)=\varphi_2(\varphi_1^{-1}(w)).
$$
Then
$$
\psi'(w)=\varphi_2'(\varphi_1^{-1}(w))\cdot (\varphi_1^{-1})'(w).
$$
Using the change of variable formula we obtain
\begin{multline}
\iint\limits_{\Omega_1}|\psi'(w)|^{\alpha}~dudv=\iint\limits_{\mathbb D}|\psi'|^{\alpha}(\varphi_1(z))J_{\varphi_1}(z)~dxdy\\ 
=\iint\limits_{\mathbb D}|\varphi_2'(z_)|^{\alpha}|(\varphi^{-1}_1)'|^{\alpha}(\varphi_1(z))|\varphi_1'(z)|^2~dxdy
=\iint\limits_{\mathbb D}|\varphi_2'(z_)|^{\alpha}|\varphi_1'(z)|^{-{\alpha}}|\varphi_1'(z)|^2~dxdy\\
=\iint\limits_{\mathbb D}|\varphi_2'(z_)|^{\alpha}|\varphi_1'(z)|^{2-{\alpha}}~dxdy.
\nonumber
\end{multline}
The similar calculation is correct for the inverse mapping $\psi^{-1}:\Omega_2\to\Omega_1$. 
\end{proof}

Note that integral estimate (\ref{measure}) can be rewritten in terms of the measure variation.

\begin{lem}\label{Lemma 2}
Let $\varphi_{1}:\mathbb{D}\to\Omega_{1}$, $\varphi_{2}:\mathbb{D}\to\Omega_{2}$ be conformal homeomorphisms. Then
\begin{multline}
\||\varphi_1'|-|\varphi_2'|\mid L^{2}(\mathbb D)\|\\
\le
\Big(\left[{\rm meas}\,(\varphi_1(\mathbb D^+))-{\rm meas}\,(\varphi_2(\mathbb D^+))\right]+
\left[{\rm meas}\,(\varphi_2(\mathbb D^-))-{\rm meas}\,(\varphi_1(\mathbb D^-))\right]\Big)^{\frac12}\,,
\nonumber
\end{multline}
where the sets $\mathbb D^+$ and $\mathbb D^-$ are defined by equalities $(\ref{D+-})$.
\end{lem}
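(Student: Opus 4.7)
The plan is to bound the integrand $(|\varphi_1'|-|\varphi_2'|)^2$ pointwise by the signed Jacobian difference, choosing the correct sign on each piece of the partition $\mathbb{D}=\mathbb{D}^+\cup\mathbb{D}^-$, and then convert the resulting Jacobian integrals into Lebesgue measures of the image sets $\varphi_k(\mathbb{D}^\pm)$ via the change-of-variables formula.

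First I would split the square of the $L^2$-norm along this partition. The key pointwise estimate comes from the factorization $a^2-b^2=(a-b)(a+b)$ with $a=|\varphi_1'|$, $b=|\varphi_2'|$: on $\mathbb{D}^+$, where by definition $J_{\varphi_1}\ge J_{\varphi_2}$ and hence $|\varphi_1'|\ge|\varphi_2'|$, one has
\begin{equation*}
(|\varphi_1'|-|\varphi_2'|)^2 \le (|\varphi_1'|-|\varphi_2'|)(|\varphi_1'|+|\varphi_2'|) = J_{\varphi_1}-J_{\varphi_2},
\end{equation*}
and by the symmetric argument on $\mathbb{D}^-$ the same bound holds with $J_{\varphi_2}-J_{\varphi_1}$ on the right, both right-hand sides being nonnegative by construction. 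Adding the two regional contributions gives
\begin{equation*}
\||\varphi_1'|-|\varphi_2'|\mid L^2(\mathbb{D})\|^2 \le \iint\limits_{\mathbb{D}^+}(J_{\varphi_1}-J_{\varphi_2})\,dxdy + \iint\limits_{\mathbb{D}^-}(J_{\varphi_2}-J_{\varphi_1})\,dxdy.
\end{equation*}

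To finish, I would invoke the change-of-variables formula $\iint_E J_{\varphi_k}(z)\,dxdy={\rm meas}\,(\varphi_k(E))$, which is valid for any measurable $E\subset\mathbb{D}$ since each $\varphi_k$ is a $C^\infty$ diffeomorphism onto its image. Applied with $E=\mathbb{D}^\pm$ this rewrites the four Jacobian integrals as the four image areas appearing in the statement, and extracting a square root yields the claimed inequality.

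There is no serious obstacle: the argument is a one-line pointwise majorization followed by a routine change of variables. The only point that requires attention is that the inequality $(a-b)^2\le(a-b)(a+b)$ needs $a\ge b\ge 0$, which is exactly what the definitions of $\mathbb{D}^+$ and $\mathbb{D}^-$ arrange, ensuring also that both bracketed differences in the final bound are automatically nonnegative.
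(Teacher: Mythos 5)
Your proposal is correct and is essentially the paper's own argument: the pointwise bound $(a-b)^2\le(a-b)(a+b)=a^2-b^2$ for $a\ge b\ge 0$ is just the paper's inequality $(a-b)^2\le|a^2-b^2|$ with the sign resolved by the partition $\mathbb D=\mathbb D^+\cup\mathbb D^-$, and the conclusion via the change-of-variables identity $\iint_E J_{\varphi_k}\,dxdy={\rm meas}\,(\varphi_k(E))$ is the same. Splitting the domain before rather than after applying the pointwise estimate is only a cosmetic reordering.
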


\begin{proof} By  using the elementary inequality $(a-b)^{2}\leq |a^{2}-b^{2}|$ for any
$a,b\ge0$ and the equality $|\varphi_{1}'(z)|^2=J_{\varphi}$ for conformal homeomorphisms we get
\begin{multline}
\iint\limits _{\mathbb{D}}\left(|\varphi_{1}'(z)|-|\varphi_{2}'(z)|\right)^{2}dxdy\\
\leq\iint\limits _{\mathbb{D}}\left|\left|\varphi_{1}'(z)\right|^{2}-\left|\varphi_{2}'(z)\right|^{2}\right|dxdy
=
\iint\limits _{\mathbb{D}}\left|J_{\varphi_{1}}(z)-J_{\varphi_{2}}(z)\right|dxdy\\
=\iint\limits _{\mathbb{D^+}}\left(J_{\varphi_{1}}(z)-J_{\varphi_{2}}(z)\right)dxdy
+\iint\limits _{\mathbb{D^-}}\left(J_{\varphi_{2}}(z)-J_{\varphi_{1}}(z)\right)dxdy\\
=\Big(\left[{\rm meas}\,(\varphi_1(\mathbb D^+))-{\rm meas}\,(\varphi_2(\mathbb D^+))\right]+
\left[{\rm meas}\,(\varphi_2(\mathbb D^-))-{\rm meas}\,(\varphi_1(\mathbb D^-))\right]\Big)\,.
\nonumber
\end{multline}
\end{proof}

By combining Lemma \ref{Lemma 1}, Theorem \ref{thm:TwoWW}, equality (\ref{equality}), by applying the triangle inequality and taking into account that
$\frac{2s}{s-1}=\frac{4p}{p-2}$ for $s=\frac{2p}{p+2}$, we obtain the main result of this paper:

{\bf Theorem \ref{mainthm}}. 
{\it Let $\Omega_1=\varphi_1(\mathbb{D})$ , $\Omega_2=\varphi_2(\mathbb{D})$ be a conformal $\alpha$-regular pair.
Then for any $n\in\mathbb N$
\begin{equation}
|\lambda_{n}[\Omega_{1}]-\lambda_{n}[\Omega_{2}]|
\leq 2c_n \left[C\left(\frac{4\alpha}{\alpha-2}\right)\right]^2 E_{\alpha}(\varphi_1,\varphi_2)\||\varphi_1'|-|\varphi_2'|\mid L^{2}(\mathbb D)\|\,,
\end{equation}
where 
\begin{equation}
c_{n}=\max\{\lambda_{n}^{2}[\Omega_{1}],\lambda_{n}^{2}[\Omega_{2}]\}\,.
\end{equation}
}

By Lemmas \ref{Lemma 1} and \ref{Lemma 2} follows the estimate in terms of the measure variation:
\begin{multline}
\label{main estimate 2}
|\lambda_{n}[\Omega_{1}]-\lambda_{n}[\Omega_{2}]|\leq 2c_n \left[C\left(\frac{4\alpha}{\alpha-2}\right)\right]^2 E_{\alpha}(\varphi_1,\varphi_2)\times\\
\times\Big(\left[{\rm meas}\,(\varphi_1(\mathbb D^+))-{\rm meas}\,(\varphi_2(\mathbb D^+))\right]+
\left[{\rm meas}\,(\varphi_2(\mathbb D^-))-{\rm meas}\,(\varphi_1(\mathbb D^-))\right]\Big)^{\frac12}\,.
\nonumber
\end{multline}

\section{Quasidiscs}

Now we describe a rather wide class of planar domains for which there exist conformal mappings with Jacobians of the class
$L^p(\mathbb D)$ for some $p>1$, i.e. with complex derivatives of the class
$L^p(\mathbb D)$ for some $p>2$.

\begin{defn}
A homeomorphism $\varphi:\Omega_1\to\Omega_2$
between planar domains is called $K$-quasiconformal if it preserves
orientation, belongs to the Sobolev class $W_{\operatorname{loc}}^{1,2}(\Omega_1)$
and its directional derivatives $\partial_{\alpha}$ satisfy the distortion inequality
$$
\max\limits_{\alpha}|\partial_{\alpha}\varphi|\leq K\min\limits_{\alpha}|\partial_{\alpha}\varphi|\,\,\,
\text{a.e. in}\,\,\, \Omega_1\,.
$$
\end{defn}
Infinitesimally, quasiconformal homeomorphisms transform circles to ellipses
with eccentricity uniformly bounded by $K$. If $K=1$ we recover
conformal homeomorphisms, while for $K>1$ planar quasiconformal mappings need
not be smooth.
\begin{defn}
A domain $\Omega$ is called a $K$-quasidisc if it is the image of the
unit disc $\mathbb{D}$ under a $K$-quasiconformal homeomorphism of
the complex plane $\mathbb C$ onto itself.
\end{defn}

It is well known that the boundary of any $K$-quasidisc $\Omega$
admits a $K^{2}$-quasi\-con\-for\-mal reflection \cite{GH1} and thus, for example,
any conformal homeomorphism $\varphi:\mathbb{D}\to\Omega$ can be
extended to a $K^{2}$-quasiconformal homeomorphism of the whole plane
to itself.

The boundaries of quasidiscs are called quasicircles. It is known that there are quasicircles for which no segment has finite length.
The Hausdorff dimension of quasicircles was first investigated by F. W. Gehring and J. V\"ais\"al\"a  \cite{GV73},
who proved that it can take all values in the interval $[1,2)$. S. Smirnov proved recently \cite{Smi10} that the Hausdorff dimension of
any $K$-quasicircle is at most $1+k^2$, where $k = (K-1)/(K +1)$.

Ahlfors's 3-point condition \cite{Ahl63} gives
a complete geometric characterization of quasicircles: a Jordan curve $\gamma$ in the plane is a quasicircle
if and only if for each two points $a, b$ in $\gamma$ the (smaller) arc between them has 
diameter comparable with $|a-b|$. This condition is easily checked for the snowflake.
On the other hand, every quasicircle can be obtained by an explicit snowflake-type
construction (see \cite{Roh01}).

For any planar $K$-quasiconformal homeomorphism $\varphi:\Omega_1\rightarrow\Omega_2$
the following sharp result is known: $J(z,\varphi)\in L_{loc}^{p}(\Omega_{1})$
for any $p<\frac{K}{K-1}$ (\cite{G1, A}).
\begin{prop}
\label{prop:confQuasidisc}Any conformal mapping $\varphi:\Omega_1\to\Omega_2$
of  $K$-quasidiscs $\Omega_1,\Omega_2$ belongs to $L^{1,p}(\Omega_1)$
for any $1\le p<\frac{2K^{2}}{K^2-1}$.
\end{prop}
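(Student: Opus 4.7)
The plan is to reduce the integrability of $\varphi'$ on $\Omega_1$ to a weighted integral over the unit disc via the Riemann maps, and then to invoke Gehring's sharp integrability theorem applied to the $K^2$-quasiconformal extensions of those Riemann maps supplied by the $K$-quasidisc hypothesis.

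First I let $\psi_i:\mathbb{D}\to\Omega_i$ ($i=1,2$) be Riemann maps; by the $K$-quasidisc property and the extension result already recalled in the paper, each $\psi_i$ extends to a $K^2$-quasiconformal self-homeomorphism of $\mathbb{C}$, so Gehring's sharp theorem gives $|\psi_i'|\in L^{\alpha}(\mathbb{D})$ for every $\alpha<2K^2/(K^2-1)$. Writing $\varphi=\psi_2\circ m\circ\psi_1^{-1}$ for the appropriate M\"obius automorphism $m$ of $\mathbb{D}$, changing variables $w=\psi_1(z)$, and using the chain-rule identity $\varphi'(\psi_1(z))\,\psi_1'(z)=(\psi_2\circ m)'(z)$, the integral of interest becomes
\[
\iint_{\Omega_1}|\varphi'(w)|^{p}\,du\,dv=\iint_{\mathbb{D}}|(\psi_2\circ m)'(z)|^{p}\,|\psi_1'(z)|^{2-p}\,dx\,dy.
\]

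For $1\le p\le 2$ the exponent $2-p$ is nonnegative, so the right-hand side is a product of nonnegative powers of $|\psi_i'|$, and H\"older's inequality combined with $\iint_{\mathbb{D}}|\psi_i'|^{2}\,dx\,dy=|\Omega_i|$ gives a finite bound immediately. For $2<p<2K^2/(K^2-1)$ the weight $|\psi_1'|^{2-p}$ has negative exponent, and this is controlled via the reverse H\"older inequality for the Jacobian of the $K^2$-quasiconformal extension of $\psi_1$: this yields $|\psi_1'|^{-1}\in L^{s}(\mathbb{D})$ for every $s<2/(K^2-1)$, so the requirement $p-2<2/(K^2-1)$ translates exactly into $p<2K^2/(K^2-1)$.

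The main obstacle is the $p>2$ case and attaining the sharp exponent. A crude H\"older pairing of $|(\psi_2\circ m)'|^{p}$ against $|\psi_1'|^{-(p-2)}$ using only the separate integrability ranges of the two factors produces merely the weaker bound $p<2K^4/(K^4-1)$. To reach $2K^2/(K^2-1)$ one should view $\varphi\circ\psi_1:\mathbb{D}\to\Omega_2$ as itself a single conformal map into the $K$-quasidisc $\Omega_2$ and apply the $K^2$-quasiconformal extension argument directly to this one map, so that the integrability of its derivative is controlled without incurring a second factor of $K^2$ from composing two independent extensions.
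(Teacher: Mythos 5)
Your change-of-variables identity
\[
\iint\limits_{\Omega_1}|\varphi'(w)|^{p}\,du\,dv=\iint\limits_{\mathbb{D}}|(\psi_2\circ m)'(z)|^{p}\,|\psi_1'(z)|^{2-p}\,dx\,dy
\]
is correct, and the case $1\le p\le 2$ is indeed settled by H\"older together with $\iint_{\mathbb{D}}|\psi_i'|^{2}\,dxdy=|\Omega_i|$. The gap is in the range $2<p<\frac{2K^{2}}{K^{2}-1}$, which is the only range the paper actually needs (conformal regularity requires $|\varphi'|\in L^{p}$ with $p>2$). You correctly diagnose that pairing the two separate facts $|(\psi_2\circ m)'|\in L^{\alpha}(\mathbb{D})$ for $\alpha<\frac{2K^{2}}{K^{2}-1}$ and $|\psi_1'|^{-1}\in L^{s}(\mathbb{D})$ for $s<\frac{2}{K^{2}-1}$ by H\"older only reaches $p<\frac{2K^{4}}{K^{4}-1}$, which falls short. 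But the proposed repair is vacuous: $\varphi\circ\psi_1=\psi_2\circ m$ already \emph{is} a single conformal map of $\mathbb{D}$ onto the $K$-quasidisc $\Omega_2$, and applying the $K^{2}$-quasiconformal extension argument to it returns exactly the statement $|(\psi_2\circ m)'|\in L^{\alpha}(\mathbb{D})$ that you began with. It gives no information about the joint behaviour of $|(\psi_2\circ m)'|^{p}$ and the negative power $|\psi_1'|^{2-p}$ at a common boundary point, and that joint behaviour is the whole difficulty; the weight $|\psi_1'|^{2-p}$ is forced on you by the decision to transplant the integral to $\mathbb{D}$, and no statement about $\psi_2\circ m$ alone can remove it.

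The paper's proof sidesteps this entirely by never passing to the disc: since $\Omega_1$ and $\Omega_2$ are $K$-quasidiscs, $\varphi$ itself extends by quasiconformal reflection to a quasiconformal homeomorphism $\psi$ of the whole plane, and the Gehring--Astala higher integrability theorem applied to that single global map gives $|D\psi|\in L^{p}_{\operatorname{loc}}(\mathbb{C})$ for all $p<\frac{2K^{2}}{K^{2}-1}$; restricting to the bounded set $\Omega_1$ finishes the argument with no weight and no H\"older pairing. To salvage your route you would have to prove the weighted inequality $\iint_{\mathbb{D}}|(\psi_2\circ m)'|^{p}|\psi_1'|^{2-p}\,dxdy<\infty$ directly, i.e.\ show that the extremal blow-up of $|(\psi_2\circ m)'|$ and the extremal decay of $|\psi_1'|$ cannot both be saturated at the same boundary point --- information that the single extension of $\varphi$ encodes automatically but that your two independent extensions of $\psi_1$ and $\psi_2$ do not.
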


\begin{proof} Any conformal mapping $\varphi:\Omega_1\to\Omega_2$ can be extended to a $K^2$ quasiconformal homeomorphism  $\psi$
of the whole plane to the whole plane by reflection.
Since the domain $\Omega_2$ is bounded, $\psi$ belongs to the class $L^p(\Omega_1)$ for any $1\le p<\frac{2K^2}{K^2-1}$ (\cite{G1}, \cite{A}).
Therefore $\varphi$ belongs to the same class.
\end{proof}

Denote, for $K\geq 1$, by $A_K$ the class of all $K$-quasidiscs.
Theorem \ref{mainthm} and Proposition \ref{prop:confQuasidisc}  imply the following statement.

\begin{thm}
For any $K\geq 1$ there exists $p>2$ and $M>0$ such that, for any quasidiscs $\Omega_1,\Omega_2\in A_K$ and conformal mappings 
$\varphi_k:\mathbb D\to\Omega_k$, k=1,2,
$|\varphi_1'|, |\varphi_2'|\in L^{p}(\mathbb D)$ and for any $n\in\mathbb{N}$
\begin{equation*}
|\lambda_{n}[\Omega_{1}]-\lambda_{n}[\Omega_{2}]|
\leq 2c_n M E_p(\varphi_1,\varphi_2)\||\varphi_1'|-|\varphi_2'|\mid L^{2}(\mathbb D)\|\,,
\end{equation*}
 where $c_{n}$ is defined by equality $(\ref{c_n})$.
\end{thm}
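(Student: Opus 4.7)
The plan is to reduce to Theorem \ref{mainthm} by verifying that every pair in $A_{K}\times A_{K}$ forms a conformal $p$-regular pair, for a suitable $p>2$ depending only on $K$.

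Since $K\ge 1$, the quantity $\tfrac{2K^{2}}{K^{2}-1}$ is strictly greater than $2$ (interpreted as $+\infty$ when $K=1$), so I fix once and for all some $p\in\bigl(2,\tfrac{2K^{2}}{K^{2}-1}\bigr)$; this $p$ depends only on $K$. The unit disc is trivially a $1$-quasidisc, hence a $K$-quasidisc for every $K\ge 1$, so Proposition \ref{prop:confQuasidisc} applied to each $\varphi_{k}:\mathbb{D}\to\Omega_{k}$ (a conformal mapping between two $K$-quasidiscs) yields $|\varphi_{k}'|\in L^{p}(\mathbb{D})$ for $k=1,2$; this is the first assertion of the theorem.

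Next I verify that the pair $(\Omega_{1},\Omega_{2})$ is conformal $p$-regular. The composition $\psi=\varphi_{2}\circ\varphi_{1}^{-1}$ is a conformal mapping between two $K$-quasidiscs, so Proposition \ref{prop:confQuasidisc} gives $\psi'\in L^{p}(\Omega_{1})$ and, symmetrically, $(\psi^{-1})'\in L^{p}(\Omega_{2})$. The change of variables $w=\varphi_{1}(z)$, carried out exactly as in the proof of the proposition preceding Lemma \ref{Lemma 2}, yields
$$
\iint_{\Omega_{1}}|\psi'(w)|^{p}\,dudv=\iint_{\mathbb{D}}\frac{|\varphi_{2}'(z)|^{p}}{|\varphi_{1}'(z)|^{p-2}}\,dxdy,
$$
together with the analogous identity in which the roles of $1$ and $2$ are interchanged. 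Combining the two through $\max\{a,b\}\le a+b$ gives $E_{p}(\varphi_{1},\varphi_{2})<\infty$, so $(\Omega_{1},\Omega_{2})$ is a conformal $p$-regular pair.

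Theorem \ref{mainthm} then applies with $\alpha=p$, delivering
$$
|\lambda_{n}[\Omega_{1}]-\lambda_{n}[\Omega_{2}]|\le 2c_{n}\Bigl[C\Bigl(\tfrac{4p}{p-2}\Bigr)\Bigr]^{2}E_{p}(\varphi_{1},\varphi_{2})\,\bigl\||\varphi_{1}'|-|\varphi_{2}'|\mid L^{2}(\mathbb{D})\bigr\|,
$$
so one sets $M=[C(4p/(p-2))]^{2}$. The main obstacle is the uniformity of $M$ over the class $A_{K}$: the constant $C(4p/(p-2))$ entering Theorem \ref{mainthm} is really $\max\{C_{1},C_{2}\}$, with each $C_{k}$ the a priori \emph{domain-dependent} best Poincaré--Sobolev constant on $\Omega_{k}$ (see Theorem \ref{thm:BoundEm} and Lemma \ref{lem:TwoWeiPol}). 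Obtaining a single $M$ depending only on $K$ requires a uniform upper bound on these constants across $A_{K}$; this is provided by the fact that every $K$-quasidisc admits a quasiconformal extension of dilatation at most $K^{2}$ (via Ahlfors's 3-point characterisation of quasicircles), which yields uniform geometric control on the domains and, through the Sobolev embedding used in the preceding sections, uniform control on $C_{k}(4p/(p-2))$.
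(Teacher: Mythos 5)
Your proposal follows essentially the same route as the paper: fix $p\in\bigl(2,\tfrac{2K^{2}}{K^{2}-1}\bigr)$ via Proposition \ref{prop:confQuasidisc}, deduce $E_{p}(\varphi_{1},\varphi_{2})<\infty$ (i.e.\ that the pair is conformal $p$-regular), and apply Theorem \ref{mainthm} with $M=\left[C\left(\frac{4p}{p-2}\right)\right]^{2}$; the paper simply makes the choice $p=\frac{2K^{2}-1}{K^{2}-1}$ explicit. The uniformity of $M$ over $A_{K}$ that you flag in your last paragraph is a genuine point, but the paper does not treat it either --- it relies on its earlier assertion that the best constants $C_{k}(q)$ depend only on the integrability exponent.
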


\begin{proof}
Since $\frac{2K^2}{K^2-1}>2$, by Proposition \ref{prop:confQuasidisc} there exists $2<p<\frac{2K^2}{K^2-1}$,
say $p=\frac{2K^2-1}{K^2-1}$, such that $|\varphi'_1|,|\varphi'_2|\in L^p(\mathbb D)$ and $E_p({\varphi_1,\varphi_2})<\infty$. Therefore, by 
Theorem \ref{mainthm} the statement follows with, say $p=\frac{2K^2-1}{K^2-1}$ and 
$$
M=\left[C\left(\frac{4p}{p-2}\right)\right]^2=\left[C\left(4(2K^2-1)\right)\right]^2.
$$ 
\end{proof}

\end{document}